\newtheorem{theorem}{Theorem}[section]
\theoremstyle{plain}
\newtheorem{lemma}[theorem]{Lemma}
\newtheorem{Remark}{Remark}
\numberwithin{equation}{section}
\begin{document}
\title[Non-cooperative Elliptic Systems]{Non-cooperative Systems Modeling Repulsive Interaction of Bose-Einstein Condensates in $\mathbb{R}^N$}
\author{Liliane A. Maia}
\address{Departamento de Matem\'{a}tica, UNB, 70910-900 Bras\'{\i}lia, Brazil.}
\email{lilimaia@unb.br}
\author{ Mayra Soares}
\address{Departamento de Matem\'{a}tica, UNB, 70910-900 Bras\'{\i}lia, Brazil.}
\email{ssc\_mayra@hotmail.com}
\author{Ricardo Ruviaro}
\address{Departamento de Matem\'{a}tica, UNB, 70910-900 Bras\'{\i}lia, Brazil.}
\email{ricardoruviaro@gmai.com}
\thanks{Research supported by FAPDF 0193.001300/2016, CNPq/PQ 308378/2017-2 (Brazil), and PROEX/CAPES, Brazil.}
\date{\today}

\begin{abstract}
Inspired by so many possible applications  of this class of problems, we seek solution for non-cooperative elliptic systems of two Schr\"odinger equations. General conditions are assumed under the potentials, which produces convenient spectral properties on the elliptic operator concerned, hence the non-cooperation characterizes it as a strongly indefinite problem. An Abstract Linking Theorem developed previously by the first two authors is the main tool, since variational approach is applied. Furthermore, super and asymptotically quadratic nonlinearities are considered. 
\end{abstract}
\maketitle

\section{Introduction}
\label{sec:introduction}

\hspace{0.75cm}This work is devoted to solve non-cooperative elliptic systems under hypotheses on the potentials introduced by B. Sirakov in \cite{Sir}. For $N\geq 2$ we consider the elliptic system given by

\begin{equation} \label{ES}
\left \{ \begin{array}{ll}

-\Delta u + V_1(x)u =  F_u(x,u,v) \quad \; \;\text{in} \quad \mathbb{R}^N\\
-\Delta v + V_2(x)v = - F_v(x,u,v) \quad \text{in} \quad \mathbb{R}^N,
\end{array}
\right.
\end{equation}
where, for $i=1,2$, the potentials $V_i \in L^\infty_{loc}(\mathbb{R}^N),$ satisfy\\

\hspace{-0.4cm}$(V_1)_i$ \ There exist constants $a_i\geq0$ such that
$V_i(x)\geq - a_i \quad \text{for \ all} \quad x \in \mathbb{R}^N;$\\

\hspace{-0.4cm}$(V_2)_i$ \ Defining for all $\Omega \subset \mathbb{R}^N$ open,
$$\nu_i(\Omega) := \inf\left\{\int_{\Omega}\left(|\nabla u(x)|^2+V_i(x)u^2(x)\right)\,dx : u \in H^1_0(\Omega), \ ||u||_{L^2(\Omega)}=1\right\},$$
assume that $\displaystyle\lim_{r\to +\infty}\nu_i(\mathbb{R}^N\setminus B_r)= +\infty$, where $B_r = \{x \in \mathbb{R}^N : |x|\leq r\} $; \\

\hspace{-0.4cm}$(V_3)_i$ \ Setting the first eigenvalue of operator $A_i:= -\Delta + V_i(x)$ in $L^2(\mathbb{R}^N)$ by
$$\lambda^i_1:= \inf\left\{\int_{\mathbb{R}^N}\left(|\nabla u(x)|^2+V_i(x)u^2(x)\right)\,dx : u \in H^1(\mathbb{R}^N), \ ||u||_{L^2(\mathbb{R}^N)}=1\right\},$$
assume that $\lambda^i_1 >0.$\\

\hspace{0.75cm}Note that classes of potentials satisfying conditions $(V_1)_i$ and $(V_2)_i$ are
	$$V_i(x) \to +\infty \quad \text{as} \quad |x|\to +\infty,$$
	or
	$$V_i(x) > V_0 >0 \quad \text{and} \quad \dfrac{1}{V_i(x)}\in L^1(\mathbb{R}^N),$$
	or even such that 
	$$|\Omega^{V_i}_M| <+\infty \quad \text{for \ all} \quad M>0,$$
	where $\Omega^{V_i}_M = \{x \in \mathbb{R}^N : V_i(x)<M\}$. \ However, \ potentials such as $V_i(x) = x_1^2x_2^2  \ .  .  . \  x_n^2 - a_i$, with constants $a_i>0$ such that $\lambda^i_1>0$, satisfy conditions $(V_1)_i-(V_3)_i$ although they do not satisfy any of the conditions mentioned above. Furthermore, the following hypothesis
	$$\lim_{r \to +\infty}\left|\bigcap_{i=1,2}\Big(\Omega^{V_i}_M \Big)\setminus (B_r)\right| = 0, \quad \text{for \ all} \quad M>0,$$
	is a sufficient condition for hypothesis $(V_2)_i$ (cf. \cite{Sir}, Theorem 1.4 and \cite{Ra}, Remark 1.3). 

\hspace{-0.4cm}$(V)$ \ There exists $h(x) \in L^\infty_{loc}(\mathbb{R}^N)$ and constants $\alpha>1$, $c_0>0$, $r_0>0$ such that
$$h(x)\leq c_0\left[1+\big(\max_{i=1,2}\{0,V_i(x)\}\big)^{\frac{1}{\alpha}}\right], \quad \text{if} \quad |x|\geq r_0.$$
Moreover, setting $a_0:=\displaystyle\inf_{x\in \mathbb{R}^N} h(x)$, assume that $a_0> {\lambda^1_1}>0$.\\

\hspace{0.75cm}An example which fulfils this hypothesis is given just before the main result of Section 2.

\hspace{0.75cm}Once again inspired by \cite{Sir}, suitable hypotheses involving function $h(x)$ are required on $F$ for the both asymptotically and super quadratic cases.

\hspace{0.75cm}A relevant motivation to study problem (\ref{ES}) stems from the system
$$
\left \{ \begin{array}{lll}
i \dfrac{\partial\Phi_1}{\partial t} = \Delta \Phi_1 - V_1(x)\Phi_1 - \mu_1|\Phi_1|^2\Phi_1 - \beta_{12}|\Phi_2|^2\Phi_1\\
\\
i \dfrac{\partial\Phi_2}{\partial t} = \Delta \Phi_2 - V_2(x)\Phi_2 - \mu_2|\Phi_2|^2\Phi_2 - \beta_{21}|\Phi_1|^2\Phi_2,
\end{array}
\right.
$$
which has been used to model Bose-Einstein condensates
in two different hyperfine spin states with the corresponding condensate wave functions $\Phi_j, \ j = 1,2$. In this case, $V_j$ is the magnetic trapping potential for the respective hyperfine spin state, the
constants $\mu_j$ and $\beta_{jk}$ are the intra-species and interspecies scattering lengths, which represent the interactions between particles. If $\beta_{12}>0$ it means there is repulsive interaction between particles $1$ and $2$, on the other hand, if $\beta_{12}<0$, there exists attractive interaction between them.
 In purpose of finding its solitary wave solutions of the form $\Phi_j = e^{-i\lambda_j t}u_j(x), \ \lambda_j>0,$ for $j=1,2$, the system above is transformed into the elliptic system
$$
\left \{ \begin{array}{ll}
 -\Delta u_1 + \big(V_1(x) + \lambda_1\big)u_1 + \mu_1u^3_1 + \beta_{12}u^2_2u_1 = 0\\
  -\Delta u_2 + \big(V_2(x) + \lambda_2\big)u_2 + \mu_2u^3_2 + \beta_{21}u^2_1u_2 = 0.
  \end{array}
  \right.
$$
Assuming $\beta_{12}\beta_{21} < 0$ this elliptic system is non-cooperative and therefore it is an practical example of our problem. For details of physical applications see \cite{CLLL,LW} and references therein.

\hspace{0.75cm} Originally, our motivation to study (\ref{ES}) came from \cite{Co,CM2}. Indeed, in \cite{Co} D. G. Costa treated the cooperative elliptic system 
$$
\left \{ \begin{array}{ll}
-\Delta u + a(x)u = F_u(x,u,v) \quad \text{in} \quad \mathbb{R}^N\\
-\Delta v + b(x)v = F_v(x,u,v) \quad \;  \text{in} \quad \mathbb{R}^N,
\end{array}
\right.
$$
under the hypotheses that $a(x), \ b(x) \to +\infty$ as $n \to +\infty$ and the nonlinear term $F$ is non-quadratic at infinity. Moreover, he mentioned that his results could be generalized to non-cooperative elliptic systems applying techniques found in \cite{CM2}. Following this spirit and trying to generalize the results in \cite{Co} for non-cooperative elliptic systems, we have profited from ideas in the remarkable paper \cite{Sir}, where B. Sirakov considered a scalar problem under more general hypotheses on the potentials and worked with a super-quadratic nonlinearity.

\hspace{0.75cm}To the best of our knowledge, non-cooperative elliptic systems under assumptions similar to those introduced by B. Sirakov have not been treated yet. In fact, most of the well known papers in non-cooperative elliptic systems deals with constant potentials or even considers only bounded domains, as for instance, \cite{CM2, DFD, Po, BC} and references therein. Although we are also able to find in the literature some relevant works on non-cooperative elliptic systems with non-constant potentials and treated on the whole space, normally \ their \ authors \ consider \ $V=V_i, \ i=1,2$, and assume hypotheses away from those required by us, check for example \cite{WXZ,FWXZ}.

\hspace{0.75cm}Due to hypotheses $(V_1)_i$ and $(
V_3)_i$, problem (\ref{ES}) must be tackled in the subspace of $H^1(\mathbb{R}^N,\mathbb{R}^2)$ defined by
\begin{equation}\label{E}
E:= \left\{(u_1,u_2)\in H^1(\mathbb{R}^N,\mathbb{R}^2) : \int_{\mathbb{R}^N} \big(V_1(x)u_1^2(x) + V_2(x)u_2^2(x)\big)\,dx < +\infty \right\},
\end{equation}
which is a Hilbert space continuously embedded in $H^1(\mathbb{R}^N,\mathbb{R}^2)$ when endowed with the inner product
\begin{equation}\label{ip}
\Big((u_1,u_2),(v_1,v_2)\Big)_E:=\sum_{i=1,2}\int_{\mathbb{R}^N}\big(\nabla u_i(x) \cdot \nabla v_i(x)+V_i(x)u_i(x)v_i(x)\big)\,dx,
\end{equation}
and the correspondent norm $\big|\big|(u_1,u_2)\big|\big|^2_E:=\Big((u_1,u_2),(u_1,u_2)\Big)_E$ (cf. \cite{Ra} and \cite{Sir}, both Lemma 2.1). Furthermore, $E$ is compactly embedded in the weighted spaces $L^s_{h(x)}(\mathbb{R}^N,\mathbb{R}^2)$, for $2\leq s <+\infty$ if $N=2$, and for $2 \leq s < 2^{\#} := 2^* - \dfrac{4}{\alpha(N-2)}$ if $N>2$ (cf. \cite{Ra}, Proposition 2.4 and also \cite{Sir}, Proposition 3.1).
Hence, the spectra of operators $A_i$ are discrete and consist of a sequence ${\lambda_n^i}$ of positive eigenvalues such that $\lambda^i_n \to +\infty$ as $n \to +\infty.$ (cf. \cite{Sir}, Lemma 3.2). Note that $2^\#$ is determined such that $2<2^\#\leq2^*$ taking into account that $\alpha>1$.

\hspace{0.75cm}In our point of view, the most interesting feature of hypotheses on $V_i$ is that the elliptic operator associated to problem (\ref{ES}) has pure point spectrum. Although such a spectrum has negative and positive parts composed by unbounded sequences, which makes the problem strongly indefinite, its discreteness brings the necessary compactness for the problem. Furthermore, the narrow relation between potentials $V_i$ and nonlinearity $F$, enables us to get the desired geometry for the problem. In view of these facts, we are able to find a non trivial solution for (\ref{ES}). 

\hspace{0.75cm}All hypotheses and observations made up to now, will be assumed throughout this paper. From now, asymptotically and super quadratic cases are going to be treated separately. Section 2 is dedicated to the asymptotically quadratic problem. In order to do so, we recall the Abstract Linking Theorem in \cite{MS}, which is used to prove the existence of solution in this case, as well as in the super-quadratic case. First, exploiting the profile of spectrum presented by the associated operator we decompose space $E$ in appropriate subspaces for the linking structure. In this environment, the requirements of the abstract result are verified: compactness, linking geometry and boundedness of Cerami sequences of the functional associated to the problem. Throughout Section 3, we follow the same script for the super quadratic case, under the necessary changes. In this case, it is worth to focus in the argument for the boundedness of the sequences. Here, the well known Ambrosetti-Rabinowitz condition \cite{AR} would not be sufficient as in the scalar problem. So, to circumvent this difficulty we betake the resourceful hypothesis $(f5)$ introduced by L. Jeanjean and K. Tanaka in \cite{JeTa}. Finally, we point out that this scheme can be applied to systems with more than two equations.

\section{Asymptotically Quadratic Elliptic Systems}

\hspace{0.75cm}With the purpose of studying the asymptotically quadratic case the following assumptions on the Hamiltonian $F$ are required.\\

\hspace{-0.6cm}$(F_1)$ \ $F(x,s,t) \in C^1(\mathbb{R}^N\times\mathbb{R}^2,\mathbb{R}^+)$  and
$$\dfrac{F(x,s,t)}{h(x)} = o(|(s,t)|^2), \quad \text{as} \quad |(s,t)|\to 0,$$
$$\dfrac{F(x,s,t)}{|(s,t)|^2} = h(x), \quad \text{as} \quad |(s,t)|\to +\infty,$$
uniformly in $x\in \mathbb{R}^N$.
\\

\hspace{-0.6cm}$(F_2)$ Setting $Q(x,s,t) := F_s(x,s,t)s + F_t(x,s,t)t - 2F(x,s,t)$,  for $(x,s,t) \in \mathbb{R}^N\times\mathbb{R}^2$, one has
$$\lim_{|(s,t)| \to +\infty} Q(x,s,t) = +\infty,$$
uniformly in $x\in \mathbb{R}^N$.\\

\begin{Remark}\label{H}
	It is worth to highlight the close relation between hypotheses $(V)$ and $(F_1)$. Since a linking geometry is sought, these assumptions provide the required features for the functional associated to problem (\ref{ES}). Thereby, note that, if instead of $F\geq 0$ as in $(F_1)$, it was considered $F\leq 0$, then in $(V)$ it would be asked $a_0>\lambda_1^2$. Indeed, in this case, problem (\ref{ES}) could be treated in the same way, provided that the positions of the equations were exchanged. 
\end{Remark}

\hspace{0.75cm}In view of $(F_1)$, given $\varepsilon>0$, for $2\leq p < +\infty$ if $N=2$ and for $2\leq p<2^\#$ if $N>2$, there exist constants $C>0$ and $\tilde{C}_{\varepsilon}>0$ such that
\begin{equation}\label{ese1}
\big|F(x,u(x),v(x))\big| \leq Ch(x)\Big(\varepsilon|(u(x),v(x))|^2 + \tilde{C}_\varepsilon|(u(x),v(x))|^p\Big),
\end{equation}
almost everywhere in $\mathbb{R}^N$. In addition, assuming also hypotheses $(V_1)_i-(V_3)$ for $i=1,2$ and $(V)$, the functional
$$\int_{\mathbb{R}^N}\big|F(x,u(x),v(x))\big|\ dx$$
belongs to $C^1(E,\mathbb{R})$, since for all $\varepsilon>0$ given it satisfies
\begin{equation}\label{ese2}
\int_{\mathbb{R}^N}\big|F(x,u(x),v(x))\big|\ dx\leq \varepsilon||(u,v)||^2_E + C_\varepsilon||(u,v)||^p_{E},
\end{equation}
for some constant $C_{\varepsilon} >0$ (cf. \cite{Ra}, Lemma 2.2 and also \cite{Sir}, Lemma 3.1).\\

\hspace{0.75cm}In virtue of $(V)$ and $(F_1)$ it is possible to assert that the growth of $F$ is controlled by the growth of $V_i$ by means of $h$, hence $F$ can be unbounded but under some restrictions. For instance, a function $F$ such that $$\Big(F_s(x,s,t),F_t(x,s,t)\Big) = c_0h(x)\left(\dfrac{s^3}{1+s^2},\dfrac{t^3}{1+t^2}\right),$$
or
$$\Big(F_s(x,s,t),F_t(x,s,t)\Big) = c_0h(x)\left(\dfrac{s(s^2+t^2)}{1+s^2+t^2},\dfrac{t(s^2+t^2)}{1+s^2+t^2}\right),$$
where $h(x)\geq a_0 >\lambda^1_1$, \  $V_i(x)\geq \big[h(x)\big]^{\alpha}$ for $|x|>r_0$, with $c_0, \ a_0, \ r_0$ positive constants and $\alpha >1$, satisfy assumptions $(V)$ and $(F_1)$.

\hspace{0.75cm}The main result of this section is stated below.

\begin{theorem}\label{est1}
	Under the assumptions $(V_1)_i-(V_3)_i, (V)$ and $(F_1)-(F_2)$, system (\ref{ES}) has a nontrivial solution in $H^1(\mathbb{R}^N,\mathbb{R}^2).$	
\end{theorem}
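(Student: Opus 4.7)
The approach is to apply the Abstract Linking Theorem of \cite{MS} to the $C^1$-functional $I:E\to\mathbb{R}$ defined by
$$I(u,v) = \tfrac{1}{2}\int_{\mathbb{R}^N}(|\nabla u|^2 + V_1 u^2)\,dx - \tfrac{1}{2}\int_{\mathbb{R}^N}(|\nabla v|^2 + V_2 v^2)\,dx - \int_{\mathbb{R}^N} F(x,u,v)\,dx,$$
whose critical points are precisely the weak solutions of (\ref{ES}). The opposite sign in the second equation makes the quadratic part strongly indefinite on $E$; meanwhile (\ref{ese2}) and the compact embedding of $E$ into $L^s_h(\mathbb{R}^N,\mathbb{R}^2)$ ensure that $I$ is well-defined and of class $C^1$.

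\textbf{Decomposition of $E$.} To detect the linking directions I would study the asymptotic quadratic form
$$I_\infty(u,v) := \tfrac{1}{2}\!\int(|\nabla u|^2 + (V_1-2h)u^2)\,dx - \tfrac{1}{2}\!\int(|\nabla v|^2 + (V_2+2h)v^2)\,dx.$$
Since $A_2+2h\geq A_2\geq\lambda_1^2>0$, the $v$-part is strictly anti-coercive; on the other hand, $A_1-2h$ is indefinite because $\inf_{\mathbb{R}^N} h = a_0>\lambda_1^1$, and by the discreteness of the spectrum (cf. \cite{Sir}, Lemma 3.2) its negative eigenspace $Y^-$ is finite-dimensional. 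Embedding $Y^-$ into $E$ via $u\mapsto(u,0)$ and denoting the image by $\widetilde{Y}^-$, I then set $X^-:=\widetilde{Y}^-\oplus\{(0,v)\in E\}$ and let $X^+$ be the orthogonal complement of $X^-$ in $E$ with respect to the inner product (\ref{ip}). This yields $E=X^+\oplus X^-$ with $I_\infty$ positive definite on $X^+$ and negative definite on $X^-$.

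\textbf{Linking geometry.} On a small sphere $\partial B_\rho\cap X^+$, the spectral gap inherent in the decomposition together with (\ref{ese2}) applied with $\varepsilon$ small yields $I\geq\delta>0$. To construct the matching linking set, pick $e\in X^+$ of unit norm and define
$$\mathcal{M}:=\{z+te:z\in X^-,\ \|z\|_E\leq R,\ 0\leq t\leq R\}.$$
By the second asymptotic in $(F_1)$, $F(x,s,t)$ is close to $h(x)(s^2+t^2)$ for $|(s,t)|$ large, so the definiteness of $I_\infty$ on $X^-\oplus\mathbb{R}e$ combined with a Fatou-type argument along any sequence escaping to infinity in this subspace produces $I\leq 0$ on $\partial\mathcal{M}$ for $R$ large enough. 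This delivers the linking hypothesis of \cite{MS}.

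\textbf{Compactness and conclusion.} The main obstacle is to show that Cerami sequences at the linking level $c$ are bounded, and here hypothesis $(F_2)$ is decisive. For a Cerami sequence $(u_n,v_n)$ the identity
$$\int_{\mathbb{R}^N} Q(x,u_n,v_n)\,dx = 2I(u_n,v_n)-I'(u_n,v_n)(u_n,v_n)\longrightarrow 2c$$
keeps $\int Q$ bounded. Supposing $t_n:=\|(u_n,v_n)\|_E\to\infty$, I would set $(w_n,z_n):=(u_n,v_n)/t_n$ and extract a weak limit $(w,z)$ in $E$. If $(w,z)\not\equiv 0$, then $|(u_n(x),v_n(x))|\to\infty$ on the positive-measure set $\{(w,z)\neq 0\}$, and Fatou combined with $(F_2)$ forces $\int Q\to\infty$, a contradiction. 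The vanishing alternative $(w,z)\equiv 0$ would be ruled out by testing $I'(u_n,v_n)$ against $(u_n,-v_n)/t_n^2$ and using the compact embedding $E\hookrightarrow L^s_h$ together with the asymptotic relation in $(F_1)$, which would force $\|(w_n,z_n)\|_E\to 0$, contradicting the normalization. Once Cerami sequences are known to be bounded, standard arguments based on the compact embedding yield strong convergence along a subsequence to a critical point of $I$ at level $c>0$, which is the sought nontrivial solution.
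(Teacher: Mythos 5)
Your overall strategy (linking theorem of \cite{MS}, compactness from the weighted embeddings, boundedness of Cerami sequences via $(F_2)$) is the right one, and your treatment of the Cerami sequences is essentially the paper's: the non-vanishing alternative is killed by $(F_2)$ through the identity $2I-I'(\cdot)(\cdot)=\int Q$, and the vanishing alternative is excluded by testing $I'(u_n,v_n)$ against $(u_n,-v_n)/t_n^2$, which is a compact reformulation of the paper's computation leading to $\int 2h(\tilde u^2-\tilde v^2)\,dx=1$. The gap is in the linking geometry. You decompose $E=X^+\oplus X^-$ according to the sign of the \emph{asymptotic} form $I_\infty$, you pick $e\in X^+$ (so $I_\infty(e)>0$ by your own construction), and you then claim that ``the definiteness of $I_\infty$ on $X^-\oplus\mathbb{R}e$'' gives $I\leq 0$ on $\partial\mathcal{M}$. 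But $I_\infty$ is not negative definite on $X^-\oplus\mathbb{R}e$: along the ray $te$ with $t=R$ and $z=0$ (a point of the top face of $\partial\mathcal{M}$) the asymptotics in $(F_1)$ give $I(Re)=R^2I_\infty(e)+o(R^2)\to+\infty$, so the boundary estimate fails precisely where it is needed. The two decompositions play different roles and cannot be merged: the splitting that diagonalizes $L$ (here $E_1\oplus E_2$, the $(u,0)$ and $(0,v)$ components) is what makes $S$ and $\partial Q$ link and what hypothesis $(I_1)$ requires, while the direction $e$ must be chosen \emph{inside the positive space of $L$ but inside the negative cone of $I_\infty$}. This is exactly what $(V)$ buys in the paper: taking $e$ the first eigenfunction of $A_1$, normalized so that $\|(e,0)\|_E^2=\lambda_1^1\|e\|_{L^2}^2=1$, the inequality $a_0>\lambda_1^1$ yields $\int h e^2>1$, hence $\tfrac12\|(e,0)\|_E^2-\int h e^2<0$ and the functional is asymptotically negative on $\mathbb{R}^+e\oplus E_2$, which is what the contradiction argument in Lemma \ref{esl3} exploits.

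A secondary concern with your decomposition: you assert that the negative eigenspace of $A_1-2h$ is finite dimensional, but $(V)$ only controls $h$ by $\max_{i}\{0,V_i\}^{1/\alpha}$, so $2h$ may blow up in regions where $V_2$ is large and $V_1$ is not; there $V_1-2h\to-\infty$ and it is not clear that $A_1-2h$ is bounded below or has discrete spectrum. The paper never forms this operator and so avoids the issue. If you replace your choice of $e$ by the paper's first eigenfunction of $A_1$ and run the $\partial Q$ estimate by the normalization-and-contradiction argument of Lemma \ref{esl3} (using (\ref{ese5})), the rest of your outline goes through.
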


\subsection{Variational Framework}

\hspace{2.5cm}

\hspace{0.75cm} In order to tackle problem (\ref{ES}) by means of variational methods, the strongly indefinite functional $I:E\to \mathbb{R}$ given by
\small{
	\begin{eqnarray}\label{FI}
	I(u,v) &=& \dfrac{1}{2}\int_{\mathbb{R}^N}\Big(|\nabla u(x)|^2 + V_1(x)u^2(x)\Big)\,dx\nonumber\\
	&&- \dfrac{1}{2}\int_{\mathbb{R}^N}\Big(|\nabla v(x)|^2 +V_2(x)v^2(x)\
	\Big)\,dx - \int_{\mathbb{R}^N}F(x,u(x),v(x))\ dx\nonumber\\
	&=& \dfrac{1}{2} \Big(||(u,0)||^2_E - ||(0,v)||_E^2\Big) - \int_{\mathbb{R}^N}F(x,u(x),v(x))\ dx,
	\end{eqnarray}
}\normalsize
is associated to the elliptic system (\ref{ES}). Thereby, $I\in C^1(E,\mathbb{R})$ and

\begin{eqnarray}\label{DFI}
I'(u,v)(\varphi,\psi) &=& \int_{\mathbb{R}^N}\big(\nabla u(x) \cdot \nabla \varphi(x)+V_1(x)u(x)\varphi(x)\big)\,dx\nonumber\\ &&- \int_{\mathbb{R}^N}\big(\nabla v(x) \cdot \nabla \psi(x)+V_2(x)v(x)\psi(x)\big)\;dx\nonumber\\
&&- \int_{\mathbb{R}^N}F_u(x,u(x),v(x))\varphi(x)\ dx - \int_{ \mathbb{R}^N}F_v(x,u(x),v(x))\psi(x) \ dx\nonumber\\
&=& \Big((u,0),(\varphi,0)\Big)_E - \Big((0,v),(0,\psi)\Big)_E\nonumber\\
&&- \int_{\mathbb{R}^N}\Big(F_u(x,u(x),v(x))\varphi(x) + F_v(x,u(x),v(x))\psi(x)\Big)\;dx,
\end{eqnarray}
hence critical points of $I$ are weak solutions for (\ref{ES}). 

\hspace{0.75cm}To obtain a nontrivial critical point of the functional $I$ we make use of an abstract linking theorem proved by the authors in \cite{MS}, which we now recall.

\begin{theorem}\label{ALT}
	\textbf{Linking Theorem for Cerami Sequences:} Let $E$ be a real Hilbert space, with inner product $\big( \cdot, \cdot \big)$, $E_1$ a closed subspace of $E$ and $E_2=E_1^{\perp}$. Let $I \in C^1(E,\mathbb{R})$ satisfying:\\
	
	\hspace{-0.5cm}$(I_1) \ \ I(u)=\dfrac{1}{2}\big(Lu,u\big)+B(u),$ for all $u \in E$, where $u=u_1 +u_2 \in E_1 \oplus E_2$, $Lu=L_1u_1 + L_2u_2$ and\  $L_i: E_i \rightarrow E_i, \ i=1,2$ is a bounded linear self adjoint mapping.\\
	
	\hspace{-0.5cm}$(I_2) \ \ B$ is weakly continuous and uniformly differentiable on bounded subsets of $E$.\\
	
	\hspace{-0.5cm}$(I_3) \ $ There exist Hilbert manifolds $S,Q \subset E$, such that $Q$ is bounded and has boundary $\partial Q$, constants $\alpha > \omega$ and $v \in E_2$ such that\\
	$(i) \ S\subset v + E_1$ and $I \geq \alpha$ on $S$;\\
	$(ii) \ I\leq \omega$ on $\partial Q$;\\
	$(iii) \ S$ and $\partial Q$ link.\\
	
	\hspace{-0.5cm}$(I_4) \  \ $
	If for a sequence $(u_n)$, $I(u_n)$ is bounded and  $\left(1+||u_n||\right)||I'(u_n)|| \to 0$, as $n\to +\infty$, then $(u_n)$ is bounded.
	\vspace{0.5cm}\\
	\hspace{-0.5cm}Then I possesses a critical value $c\geq \alpha$.
\end{theorem}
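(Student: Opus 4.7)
The plan is to apply the Abstract Linking Theorem (Theorem \ref{ALT}) to the functional $I$ from (\ref{FI}), verifying $(I_1)$--$(I_4)$ in order; the critical point it produces is the sought non-trivial solution in $E\subset H^1(\mathbb{R}^N,\mathbb{R}^2)$.

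For $(I_1)$, I exploit the discrete spectrum of $A_1=-\Delta+V_1$ provided by $(V_1)_1$--$(V_3)_1$: a sequence $0<\lambda_1^1\le\lambda_2^1\le\cdots\to+\infty$ with $L^2$-orthonormal eigenfunctions $\{e_n^1\}$. Using the gap $a_0>\lambda_1^1$ from $(V)$ and the asymptotic profile in $(F_1)$, choose a finite $k\ge 1$ so that $H_-:=\mathrm{span}\{e_1^1,\dots,e_k^1\}$ coincides with the (finite-dimensional) negative eigenspace of the quadratic form $u\mapsto\|u\|_{A_1}^2-2\int h u^2$ and so that the same form is coercive on its $(\cdot,\cdot)_{A_1}$-orthogonal complement $H_+$. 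Set
\[
E_1=H_+\times\{0\},\qquad E_2=(H_-\times\{0\})\oplus(\{0\}\times H^1(\mathbb{R}^N));
\]
then $E=E_1\oplus E_2$ orthogonally in $(\cdot,\cdot)_E$. With $L(u,v)=(u,-v)$ (splitting as $L_1=\mathrm{Id}$ on $E_1$ and $L_2(u^-,v)=(u^-,-v)$ on $E_2$, both bounded self-adjoint) and $B(u,v)=-\int F(x,u,v)\,dx$, the definition (\ref{FI}) reads $I=\tfrac12(Lu,u)_E+B(u)$, giving $(I_1)$.

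Condition $(I_2)$ follows from the compact embedding $E\hookrightarrow L^s_{h(x)}(\mathbb{R}^N,\mathbb{R}^2)$ for $2\le s<2^{\#}$, which upgrades weak to strong convergence; together with (\ref{ese1}) and dominated convergence this yields weak continuity of $B$ and uniform Fréchet differentiability on bounded subsets. For $(I_3)$, I take $S=\partial B_\rho\cap E_1$ for $\rho$ small; by (\ref{ese2}), $I(u^+,0)\ge(\tfrac12-\varepsilon)\rho^2-C_\varepsilon\rho^p\ge\alpha>0$ on $S$. Fix $e\in E_1$ with $\|e\|_E=1$ and set
\[
Q=\{w+te:\,w\in E_2,\ \|w\|_E\le R,\ t\in[0,R]\}
\]
for $R$ large. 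On each face of $\partial Q$, the coercivity of the asymptotic quadratic form in the negative directions (negative-definite on $H_-\times\{0\}$ by construction of $k$, and automatic on $\{0\}\times H^1$ since $-\tfrac12\|v\|^2-\int F\le 0$), combined with $F\sim h|(u,v)|^2$ from $(F_1)$, forces $I\to-\infty$ uniformly as $R\to\infty$, so $\sup_{\partial Q}I<\alpha$ for $R$ large enough. The linking of $S$ with $\partial Q$ is the Benci--Rabinowitz-type assertion for strongly indefinite functionals covered within the abstract framework of \cite{MS}.

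Finally, $(I_4)$: if $(u_n,v_n)$ is a Cerami sequence, the identity $2I(u_n,v_n)-I'(u_n,v_n)(u_n,v_n)=\int Q(x,u_n,v_n)\,dx$ (with $Q$ from $(F_2)$) keeps $\int Q(x,u_n,v_n)\,dx$ bounded. Assume for contradiction $\|(u_n,v_n)\|_E\to\infty$ and normalize $(\tilde u_n,\tilde v_n)=(u_n,v_n)/\|(u_n,v_n)\|_E$; extract a weak limit $(\tilde u,\tilde v)$ with strong convergence in every $L^s_{h(x)}$. If $(\tilde u,\tilde v)\not\equiv 0$, divide $I'(u_n,v_n)(\varphi,\psi)\to 0$ by $\|(u_n,v_n)\|_E$ and pass to the limit (using that $F_s(x,s,t)/|(s,t)|\to 2h(x)s/|(s,t)|$ and analogously for $F_t$, a consequence of $(F_1)$) to obtain the asymptotic linear system $A_1\tilde u=2h\tilde u$, $A_2\tilde v=-2h\tilde v$, contradicting the positivity of $A_2+2h$ and the spectral location of $A_1-2h$ encoded in $a_0>\lambda_1^1$. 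If $(\tilde u,\tilde v)\equiv 0$, use $(F_2)$: on the set $\{|(u_n,v_n)|\to\infty\}$ one has $Q(x,u_n,v_n)\to+\infty$, so a Fatou argument together with the compactness $E\hookrightarrow L^2_h$ to exclude complete vanishing in measure contradicts the boundedness of $\int Q$. In either case $(u_n,v_n)$ must be bounded, establishing $(I_4)$. Theorem \ref{ALT} then delivers a critical value $c\ge\alpha>0$ realized by a non-trivial critical point of $I$, which is the claimed weak solution of (\ref{ES}). The main obstacle is $(I_4)$: the asymptotically linear growth precludes an Ambrosetti--Rabinowitz device, so boundedness must be extracted from $(F_2)$ via the normalized-sequence dichotomy, and careful use of the compact embeddings is required in the vanishing case.
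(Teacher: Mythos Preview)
Your proposal does not address the stated theorem. Theorem~\ref{ALT} is the \emph{abstract} linking theorem, which the paper does not prove at all---it is quoted from \cite{MS}. What you have written is a sketch of Theorem~\ref{est1}, namely the verification that the concrete functional $I$ in (\ref{FI}) satisfies $(I_1)$--$(I_4)$ so that Theorem~\ref{ALT} applies. So at the level of the assignment there is a mismatch: you proved the application, not the abstract tool.

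Setting that aside and reading your text as a proof of Theorem~\ref{est1}, your approach diverges from the paper's in two substantive ways. First, the paper takes the much simpler decomposition $E_1=\{(u,0)\}$, $E_2=\{(0,v)\}$ with $L_1=\mathrm{Id}$, $L_2=-\mathrm{Id}$; the only place the hypothesis $a_0>\lambda_1^1$ enters is in choosing $e$ to be the first eigenfunction of $A_1$, which via (\ref{ese5}) gives $I\le 0$ on $\partial Q$. Your finer spectral splitting of the first component into $H_-\oplus H_+$ is unnecessary for this problem and forces you to check extra linking geometry that the paper avoids.

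Second, and more seriously, your $(I_4)$ argument has gaps in both branches of the dichotomy. In the non-vanishing branch you derive the limiting relations $A_1\tilde u=2h\tilde u$, $A_2\tilde v=-2h\tilde v$; the second forces $\tilde v=0$, but the first only says $\tilde u$ lies in the kernel of the form $\|\cdot\|_{A_1}^2-2\int h(\cdot)^2$, and nothing in $(V)$ or $(F_1)$--$(F_2)$ rules out that kernel being nontrivial---$a_0>\lambda_1^1$ guarantees a negative direction, not nondegeneracy. In the vanishing branch, if $(\tilde u,\tilde v)\equiv 0$ then you have no set of positive measure on which $|(u_n(x),v_n(x))|\to\infty$ pointwise, so the Fatou argument with $(F_2)$ cannot start. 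The paper sidesteps both issues: from (\ref{esl4e1})--(\ref{esl4e2}) it obtains the identity $\int_{\mathbb{R}^N}2h(\tilde u^2-\tilde v^2)\,dx=1$, which \emph{forces} $(\tilde u,\tilde v)\not\equiv 0$ directly, and then a single application of $(F_2)$ on the set $\Omega=\{(\tilde u,\tilde v)\neq 0\}$ contradicts the Cerami bound on $\int Q$. No dichotomy and no spectral nondegeneracy are needed.
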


\hspace{0.75cm}Thus, from now on, the plan is to find a critical point of $I$ by applying Theorem \ref{ALT}, since $I$ is an indefinite functional. Henceforth, it is necessary to check that $I$ satisfies $(I_1)-(I_4)$.

\hspace{0.75cm}First, it is convenient to analyse operator $A:= (A_1, -A_2)$ for $A_i$ defined in $(V_3)_i$ as an operator of $L^2(\mathbb{R}^N,\mathbb{R}^2)$ such that
$A(u,v):= (A_1u,-A_2v) \in L^2(\mathbb{R}^N,\mathbb{R}^2),$
for all $(u,v)$ in the domain of $A$. Moreover, for all $(u,v) \in E$, one has
\begin{equation}\label{ese3}
\Big(A(u,v),(u,v)\Big)_{L^2(\mathbb{R}^N,\mathbb{R}^2)}= \big|\big|(u,0)\big|\big|_E^2 - \big|\big|(0,v)\big|\big|_E^2.
\end{equation}
Hence, defining $$E_1 := \Big\{(u,0) \in H^1(\mathbb{R}^N,\mathbb{R}^2): \int_{\mathbb{R}^N}V_1(x)u^2(x)\ dx < +\infty\Big\}$$ and $$E_2:= \Big\{(0,v)\in H^1(\mathbb{R}^N,\mathbb{R}^2): \int_{\mathbb{R}^N}V_2(x)v^2(x)\ dx < +\infty\Big\},$$
it yields $E:= E_1\oplus E_2$. Thus, setting $L_1:=Id:E_1 \to E_1$, $L_2:= - Id:E_2 \to E_2$ and $L:E\to E$ with $L:=L_1+L_2$, where $Id$ is identity operator, it follows that
$$\Big(L(u,v),(\varphi,\psi)\Big)_E = \Big((u,0),(\varphi,0)\Big)_E - \Big((0,v),(0,\psi)\Big)_E.$$
Therefore, $I$ satisfies $(I_1)$ in Theorem \ref{ALT} since
\begin{eqnarray}\label{ese4}
\Big(L(u,v),(u,v)\Big)_E &=&\Big((u,0),(u,0)\Big)_E - \Big((0,v),(0,v)\Big)_E \nonumber \\
&=& ||(u,0)||_E^2-||(0,v)||_E^2 \nonumber\\
&=& \Big(A(u,v),(u,v)\Big)_{L^2(\mathbb{R}^N,\mathbb{R}^2)},
\end{eqnarray} 
and then
$$I(u,v)= \dfrac{1}{2}\Big(L(u,v),(u,v)\Big)_E +B(u,v),$$
where
\begin{equation}\label{B}
B(u,v):=-\displaystyle\int_{\mathbb{R}^N}F(x,u(x),v(x))\ dx.
\end{equation}

\begin{Remark}\label{esr1}
	Since each $A_i$ has discrete and positive spectrum $\sigma(A_i)=\{\lambda^i_n\}$ for $i=1,2$, $A$ \ has \ discrete \ spectrum $\sigma(A)= \displaystyle\{\lambda^1_n\}\displaystyle\cup\{-\lambda^2_n\}$. In fact, since $E$ is compactly embedded in $L^s_{h(x)}(\mathbb{R}^N,\mathbb{R}^2)$, it implies that $\sigma(A)$ is discrete (cf. \cite{Ra}, Proposition 2.4). In addition, \ $\lambda$ is an eigenvalue  of $A$ \ iff there exists \ $(u,v) \in E\setminus\{0\}$ such that $A(u,v)=\lambda(u,v)$, namely, $A_1u=\lambda u$ and $A_2v=(-\lambda)v$. Provided that, $u \not= 0$ or $v \not= 0$, then $\lambda$ is an eigenvalue of $A$ iff it is an eigenvalue of $A_1$ or $-\lambda$ is an eigenvalue of $A_2$. Therefore, $A$ has positive and negative spectra composed respectively by $\lambda^1_n \to +\infty$ and $-\lambda^2_n \to -\infty$ as $n \to +\infty.$ Furthermore, $\lambda^1_1$ is the smallest positive eigenvalue of $A$ and $-\lambda^2_1$ is the largest negative eigenvalue of $A$, and in virtue of $(V_3)_i$, $0\notin \sigma(A)$. In addition, $E_1$ and $E_2$ correspond to the subspaces of $E$ where $A$ is positive definite, and negative definite, respectively, and both are infinite dimensional.
\end{Remark}

\subsection{Establishing Compactness}

\hspace{2.5cm}

\hspace{0.75cm}With the aim of showing that $I$ satisfies $(I_2)$ in Theorem \ref{ALT}, the following lemmas are stated and proved.

\begin{lemma}\label{esl1}
	Under the hypotheses $(V_1)_i-(V_3)_i, (V)$ and $(F_1)$, functional $B:E\to \mathbb{R}$ defined as in (\ref{B})  is weakly continuous.	
\end{lemma}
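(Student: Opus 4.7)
The plan is to establish weak continuity of $B$ by combining the compact embedding of $E$ into the weighted spaces $L^s_{h(x)}(\mathbb{R}^N,\mathbb{R}^2)$, $2\leq s<2^\#$, with the growth estimate (\ref{ese1}) on $F$ and the Lebesgue dominated convergence theorem. Let $(u_n,v_n)\rightharpoonup (u,v)$ weakly in $E$. By the standard subsequence principle, it suffices to show that every subsequence of $\bigl(B(u_n,v_n)\bigr)$ admits a further subsequence converging to $B(u,v)$.

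First I would invoke the compact embedding $E\hookrightarrow L^2_{h(x)}(\mathbb{R}^N,\mathbb{R}^2)$ to extract a subsequence, not relabelled, converging strongly to $(u,v)$ in $L^2_{h(x)}$; then, refining via the compact embedding into $L^p_{h(x)}$ for some fixed $p\in (2,2^\#)$, I may assume strong convergence also in this space. Passing to a final subsequence and applying the converse of the Lebesgue theorem twice, I may arrange that $(u_n,v_n)\to (u,v)$ pointwise a.e.\ and that there exist measurable $g_2,g_p\colon\mathbb{R}^N\to\mathbb{R}$ with $|(u_n(x),v_n(x))|\leq g_2(x)$ and $|(u_n(x),v_n(x))|\leq g_p(x)$ a.e., while $h\,g_2^2$ and $h\,g_p^p$ lie in $L^1(\mathbb{R}^N)$.

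Since $F(x,\cdot,\cdot)$ is continuous by $(F_1)$, $F(x,u_n(x),v_n(x))\to F(x,u(x),v(x))$ pointwise a.e. Applying (\ref{ese1}) with any fixed $\varepsilon>0$ gives
\[
|F(x,u_n(x),v_n(x))|\leq C h(x)\bigl(\varepsilon\,g_2(x)^2+\tilde C_\varepsilon\,g_p(x)^p\bigr)\quad\text{a.e.},
\]
and the right-hand side is integrable by the previous paragraph. Dominated convergence then yields $\int_{\mathbb{R}^N}F(x,u_n,v_n)\,dx\to\int_{\mathbb{R}^N}F(x,u,v)\,dx$, i.e.\ $B(u_n,v_n)\to B(u,v)$ along this subsequence, which is exactly what was needed.

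The main technical care lies in chaining the two compact embeddings so as to obtain a \emph{single} subsequence enjoying strong convergence simultaneously in $L^2_{h(x)}$ and $L^p_{h(x)}$, together with a genuine $L^1$ majorant for $F(x,u_n,v_n)$. If one tried to dominate $F$ by a pure power without the weight $h(x)$, the tails outside large balls would fail to be integrable; it is precisely the coupling between $(V)$ and $(F_1)$, as reflected in (\ref{ese1}), that makes the argument go through.
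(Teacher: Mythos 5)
Your proposal is correct and follows essentially the same route as the paper: compact embedding of $E$ into the weighted Lebesgue spaces, the growth estimate (\ref{ese1}), and dominated convergence. You merely spell out the ``standard arguments'' the paper leaves implicit (the subsequence principle and the extraction of an a.e.\ convergent, dominated subsequence), which is a faithful completion rather than a different method.
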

\begin{proof}
	Let $(u_n,v_n) \rightharpoonup (u,v)$ weakly in $E$, since $E$ is compactly embedded in $L^s_{h(x)}(\mathbb{R}^N,\mathbb{R}^2)$ \ for $2\leq s<+\infty$ if $N=2$ and for $2\leq s < 2^\#$ if $N>2$, then $(u_n,v_n) \to (u,v)$, up to subsequences, \ strongly \ in \ $L^s_{h(x)}(\mathbb{R}^N,\mathbb{R}^2)$. In view of (\ref{ese1}), using standard arguments and applying Lebesgue Dominated Convergence Theorem, it yields
	$$\int_{\mathbb{R}^N}F(x,u_n(x),v_n(x))\ dx \to \int_{\mathbb{R}^N}F(x,u(x),v(x))\ dx,$$
	as $n \to +\infty,$ namely, $B(u_n,v_n)\to B(u,v)$ and $B$ is weakly continuous.
\end{proof}

\begin{lemma}\label{esl2}
	Assuming that $(V_1)_i-(V_3)_i, (V)$ and $(F_1)$ hold, functional $I$ is uniformly differentiable on bounded sets of $E$.
\end{lemma}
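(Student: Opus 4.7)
The plan is to reduce the statement to a uniform continuity property of the derivative. Writing $I(u,v)=\frac{1}{2}\big(L(u,v),(u,v)\big)_E + B(u,v)$ as in (\ref{FI}), the quadratic part is $C^\infty$ with globally Lipschitz differential and is therefore trivially uniformly differentiable on every bounded subset of $E$; it thus suffices to verify the property for the nonlinear part $B$ defined in (\ref{B}).

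For $B$, the fundamental theorem of calculus gives
$$
B(u+h)-B(u)-B'(u)h=\int_0^1\big[B'(u+sh)-B'(u)\big]h\,ds,\qquad u,h\in E,
$$
whence
$$
\big|B(u+h)-B(u)-B'(u)h\big|\leq \sup_{s\in[0,1]}\big\|B'(u+sh)-B'(u)\big\|_{E^*}\,\|h\|_E.
$$
Uniform differentiability of $B$ on a bounded set $D\subset E$ therefore follows once I establish that $B'$ is uniformly continuous on the enlarged bounded set $D+\overline{B^E_1(0)}$.

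I argue this uniform continuity by contradiction. Were it to fail, there would exist $\varepsilon_0>0$, sequences $(u_n,v_n)\in D$, $(\eta_n,\zeta_n)\in E$ with $\|(\eta_n,\zeta_n)\|_E\to 0$, and unit vectors $(\varphi_n,\psi_n)\in E$ such that
$$
\big|\big[B'(u_n+\eta_n,v_n+\zeta_n)-B'(u_n,v_n)\big](\varphi_n,\psi_n)\big|\geq \varepsilon_0.
$$
Invoking the compact embedding $E\hookrightarrow L^s_{h(x)}(\mathbb{R}^N,\mathbb{R}^2)$ for admissible exponents $s\in[2,2^\#)$, subsequences of $(u_n,v_n)$ and $(u_n+\eta_n,v_n+\zeta_n)$ converge to a common limit strongly in these weighted spaces, while $(\varphi_n,\psi_n)$ stays bounded there. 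Combining pointwise a.e.~convergence of the continuous maps $F_u,F_v$ in their second arguments with the growth bound
$$
|F_u(x,s,t)|+|F_v(x,s,t)|\leq \eta\, h(x)|(s,t)|+C_\eta\, h(x)|(s,t)|^{p-1},\qquad p\in(2,2^\#),
$$
inherited from $(F_1)$ in the same spirit as (\ref{ese1}), a Vitali-type convergence argument yields strong convergence of $F_u(\cdot,u_n+\eta_n,v_n+\zeta_n)-F_u(\cdot,u_n,v_n)$ to zero in the dual weighted space, and analogously for $F_v$. Testing against $(\varphi_n,\psi_n)$ through the weighted H\"older inequality then contradicts the lower bound $\varepsilon_0$.

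The delicate point is the Nemytskii-type passage to the limit: the families $\{F_u(\cdot,u_n,v_n)\}$ and $\{F_u(\cdot,u_n+\eta_n,v_n+\zeta_n)\}$ (and their $F_v$ analogues) must be equi-integrable against the weight $h(x)$ so that Vitali applies. This equi-integrability is supplied exactly by the subcritical weighted growth condition derived from $(F_1)$ combined with the strong $L^s_{h(x)}$-compactness produced by $(V_1)_i$--$(V_3)_i$ and $(V)$, which is what makes the compactness of the embedding into the subcritical weighted Lebesgue spaces indispensable here.
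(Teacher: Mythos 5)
Your proposal is correct, and it reorganizes the argument in a way that differs from the paper's proof while resting on the same analytic core. The paper applies the mean value theorem pointwise, so the remainder is expressed through intermediate points $(z_1,z_2)=(u_1+\theta_1 v_1,\,u_2+\theta_2 v_2)$, is bounded by a weighted Cauchy--Schwarz estimate of the form $C_2\|\xi\|_{L^2}\|(v_1,v_2)\|_E$, and the proof concludes by showing $\xi\to 0$ in $L^2$ via pointwise convergence, the growth bound $|F_u(x,s,t)|\le Ch(x)|(s,t)|$ and dominated convergence. You instead use the integral form of the mean value theorem at the level of the functional, $B(u+h)-B(u)-B'(u)h=\int_0^1[B'(u+sh)-B'(u)]h\,ds$, reducing the lemma to uniform continuity of $B':E\to E^*$ on bounded sets, which you then prove by a contradiction--subsequence argument using the compact embedding $E\hookrightarrow L^s_{h(x)}(\mathbb{R}^N,\mathbb{R}^2)$ and a Vitali/dominated-convergence passage to the limit for the Nemytskii operators. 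The ingredients (compactness of the weighted embeddings, the growth of $F_u,F_v$ controlled by $h$, weighted H\"older) are identical; what your packaging buys is that the base point $(u_1,u_2)$ and the increment vary simultaneously along the contradicting sequence, so the required uniformity over the bounded set is built in explicitly, whereas the paper's step ``it is equivalent to show that $\xi\to 0$ in $L^2$ as $(v_1,v_2)\to 0$ in $E$'' quietly suppresses the dependence on the base point and really needs the same two-sequence argument you spell out. The only detail worth tightening is the choice of exponents in the dual weighted space when splitting the linear and power parts of the growth bound (the conjugate exponent $p/(p-1)$ falls below $2$, so the two terms should be paired with different H\"older exponents), but this is routine.
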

\begin{proof}
	Fixed $R>0$ and given $(u_1,u_2)+(v_1,v_2), \ (v_1,v_2) \in B_R \subset E$, the closed ball centered on the origin, one has
	\small{
		\begin{eqnarray}\label{esl2e1}
		&&\Big|B(u_1 +v_1,u_2+v_2)-B(u_1,u_2)-B'(u_1,u_2)(v_1,v_2)\Big|\nonumber\\
		&=& \int_{\mathbb{R}^N}\Big|F\big(x,u_1 +v_1,u_2 +v_2\big)- F\big(x,u_1,u_2\big)- F_u\big(x,u_1,u_2\big)v_1- F_v\big(x,u_1,u_2\big)v_2\Big|dx  \nonumber\\
		&\leq& \int_{\mathbb{R}^N}\dfrac{1}{h(x)^{\frac{1}{2}}}\Big|F_u\big(x,z_1,z_2\big) + F_v\big(x,z_1,z_2\big) - F_u\big(x,u_1,u_2\big)- F_v\big(x,u_1,u_2\big)\Big|\ \big|h(x)^{\frac{1}{2}}(v_1,v_2)\big|dx \nonumber\\
		&\leq& C_2||\xi||_{L^2(\mathbb{R}^N)}||(v_1,v_2)||_E, \nonumber\\
		\end{eqnarray}
	}\normalsize
	where
	\small{
		$$\xi(x):= \dfrac{1}{h(x)^{\frac{1}{2}}}\Big|F_u\big(x,z_1(x),z_2(x)\big) + F_v\big(x,z_1(x),z_2(x)\big) - F_u\big(x,u_1(x),u_2(x)\big)- F_v\big(x,u_1(x),u_2(x)\big)\Big|$$}
	\normalsize
	\hspace{-0.1cm}and $(z_1(x),z_2(x))= (u_1(x),u_2(x))+(\theta_1(x)v_1(x),\theta_2(x)v_2(x))$, where $0\leq\theta_i(x)\leq 1$ is given by Mean Value Theorem and $C_2>0$ is the constant given by the \ continuous embedding $ E \hookrightarrow L^2_{h(x)}(\mathbb{R}^N,\mathbb{R}^2)$.
	So as to prove \ that $B$ is uniformly \ differentiable \ on bounded sets of $E$, \ it is sufficient to show that given $\varepsilon>0$ there exist $\delta>0$ such that $C_2||\xi||_{L^2(\mathbb{R}^N)}\leq \varepsilon$ for all $(u_1,u_2)+(v_1,v_2), \ (v_1,v_2) \in B_R \subset E$ with $||(v_1,v_2)||_E\leq \delta$. However, by definition of limit, this is equivalent to show that $\xi\to 0$ in $L^2(\mathbb{R}^N)$ as $(v_1,v_2) \to 0$ in $E$. 
	
	\hspace{0.75cm}Observe \ that \ $(v_1,v_2) \to 0$ in $E$ \ implies \ $(v_1,v_2)\to 0$ in $L^s_{h(x)}(\mathbb{R}^N,\mathbb{R}^2)$ \ for $2\leq s <+\infty$ if $N=2$ and for $2\leq s < 2^\#$ if $N>2$, \ then \ $(v_1(x),v_2(x))\to 0$ almost everywhere in $\mathbb{R}^N$ and \ there \ exist \ $(\varphi_s,\psi_s) \in L^s_{h(x)}(\mathbb{R}^N,\mathbb{R}^2)$ \ such \ that $|v_1(x)|\leq \varphi_s(x)$, $|v_2(x)|\leq \psi_s(x)$ almost everywhere in $\mathbb{R}^N$. Moreover, $(z_1(x),z_2(x))\to (u_1(x),u_2(x))$ almost everywhere in $\mathbb{R}^N$. Thus, $\xi(x)\to 0$ almost everywhere in $\mathbb{R}^N$. Furthermore, from $(F_1)$ and L'Hospital rule, one has
	\small{
		$$\big|F_u(x,z_1(x),z_2(x))\big|\leq Ch(x)\big|(z_1(x),z_2(x))\big| \quad \text{and} \quad \big|F_v(x,z_1(x),z_2(x))\big|\leq Ch(x)\big|(z_1(x),z_2(x))\big|,$$}
	\normalsize
	\hspace{-0.1cm}almost everywhere in $\mathbb{R}^N$, for some constant $C>0$ and the analogous holds for $u_i$ instead of $z_i$. Then, for $s=2$ it follows that
	\begin{eqnarray}\label{esl2e2}
	|\xi(x)|^2 &\leq& \dfrac{2}{h(x)}\left[\Big|F_u\big(x,z_1(x),z_2(x)\big)\Big| + \Big|F_v\big(x,z_1(x),z_2(x)\big)\Big|\right]^2 \nonumber\\
	&&+ \, \dfrac{2}{h(x)}\left[ \Big|F_u\big(x,u_1(x),u_2(x)\big)\Big| + \Big|F_v\big(x,u_1(x),u_2(x)\big)\Big| \right]^2\nonumber\\
	&\leq& \dfrac{2}{h(x)}\left[\Big(2Ch(x)|(z_1(x),z_2(x))|\Big)^2+\Big(2Ch(x)|(u_1(x),u_2(x))|\Big)^2\right]\nonumber\\
	&\leq&24C^2\Big[h(x)|(\varphi_2(x),\psi_2(x))|^2+h(x)|(u_1(x),u_2(x))|^2\Big].
	\end{eqnarray}
	Therefore, by applying Lebesgue Dominated Convergence Theorem it yields
	$$||\xi||^2_{L^2(\mathbb{R}^N)}=\int_{\mathbb{R}^N}|\xi(x)|^2\ dx \to 0,$$
	as $(v_1,v_2)\to 0$ in $E$ and the result hold.
\end{proof}

\subsection{Linking Geometry}

\hspace{2.5cm}

\hspace{0.75cm}Under the purpose of proving that $I$ satisfies $(I_3)$ in Theorem \ref{ALT}, as usual, set
$$S:= (\partial B_{\rho}\cap E_1) \quad \text{and} \quad
{Q:= \{(re,v): r\geq0, (0,v) \in E_2, ||(re,v)||_E\leq r_1\}},$$
where $0<\rho < r_1$ are constants, $(e,0)\in E_1$ and $e$ is an eigenfunction associated to $\lambda_1^1$ the first eigenvalue of $A_1$, with $||(e,0)||_E=1$, therefore, such $S$ and $Q$ ``link''. Due to the strict inequality $a_0>\lambda_1^1$ in hypothesis $(V)$ and from (\ref{ese3}), such an $e$ satisfies
\begin{eqnarray} \label{ese5}
1&=&||(e,0)||_E^2= \Big(A(e,0),(e,0)\Big)_{L^2(\mathbb{R}^N,\mathbb{R}^2)} \nonumber\\
&=& \int_{\mathbb{R}^N}\big(|\nabla e(x)|^2 +V_1(x)e^2(x)\big)\,dx = (A_1e,e)_{L^2(\mathbb{R}^N)} \nonumber \\
&=& \lambda_1^1||e||^2_{L^2(\mathbb{R}^N)}\nonumber\\
&<& a_0||(e,0)||^2_{L^2(\mathbb{R}^N,\mathbb{R}^2)}\nonumber \\
&\leq&\int_{\mathbb{R}^N}h(x)|(e(x),0)|^2 \ dx.
\end{eqnarray}
Thus, by means of (\ref{ese5}) it is possible to show that if $r_1>0$ is large enough, then $I|_S\geq \alpha >0$ and $I|_{\partial Q}\leq 0$ for some $\alpha>0$. Thus, $I$ satisfies $(I_3)$ for some $\alpha>0, \ \omega=0$ and arbitrary $(0,v) \in E_2$. Indeed, next lemma gives the result.

\begin{lemma} \label{esl3}
	Suppose that $(V_1)_i-(V_3)_i$, $(V)$ and $(F_1)$ hold, then $I$ satisfies $(I_3)$.	
\end{lemma}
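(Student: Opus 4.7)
The plan is to verify the two inequalities in $(I_3)$ separately, using the splitting provided in (\ref{FI}).

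\textbf{Step 1 (lower bound on $S$).} For any $(u,0) \in S = \partial B_\rho \cap E_1$ one has $\|(u,0)\|_E = \rho$, so (\ref{FI}) gives $I(u,0) = \tfrac{\rho^2}{2} - \int_{\mathbb{R}^N}F(x,u,0)\,dx$. Applying the growth estimate (\ref{ese2}) with a small fixed $\varepsilon$ yields $I(u,0) \geq (\tfrac{1}{2}-\varepsilon)\rho^2 - C_\varepsilon \rho^p$ for some $p \in (2,2^{\#})$. Choosing $\rho > 0$ small produces a constant $\alpha > 0$ with $I|_S \geq \alpha$.

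\textbf{Step 2 (upper bound on $\partial Q$).} The boundary of $Q$ inside the manifold $\mathbb{R}^+ e + E_2$ decomposes into the flat face $\{(0,v): (0,v)\in E_2,\ \|(0,v)\|_E \leq r_1\}$ and the spherical cap $\{(re,v): r\geq 0,\ \|(re,v)\|_E = r_1\}$. On the flat face $I(0,v) = -\tfrac{1}{2}\|(0,v)\|_E^2 - \int F(x,0,v)\,dx \leq 0$ since $F\geq 0$ by $(F_1)$.

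\textbf{Step 3 (spherical cap --- the main difficulty).} I argue by contradiction. If no $r_1 > \rho$ works then there exist sequences $r_n \geq 0$ and $(0,v_n) \in E_2$ with $t_n := \|(r_n e, v_n)\|_E \to +\infty$ and $I(r_n e, v_n) > 0$. Normalize $(\hat r_n e, \hat v_n) := t_n^{-1}(r_n e, v_n)$; the constraint $\hat r_n^{\,2} + \|(0,\hat v_n)\|_E^2 = 1$ and the compact embedding $E \hookrightarrow L^2_{h(x)}(\mathbb{R}^N,\mathbb{R}^2)$ yield, along a subsequence, $\hat r_n \to \hat r \in [0,1]$ and $(0,\hat v_n) \rightharpoonup (0,\hat v)$ in $E$ with strong convergence in $L^2_{h(x)}$ and pointwise a.e. Using $\|(0,v_n)\|_E^2 = t_n^2 - r_n^2$ a direct computation gives
\[
\frac{I(r_n e, v_n)}{t_n^2} \;=\; \hat r_n^{\,2} - \tfrac{1}{2} \;-\; \int_{\mathbb{R}^N}\frac{F(x, r_n e, v_n)}{t_n^2}\,dx.
\]
Writing $\hat w := (\hat r e, \hat v)$, on the set $\{\hat w \neq 0\}$ we have $|(r_n e(x), v_n(x))| \to +\infty$ a.e., and the asymptotic statement in $(F_1)$ together with $\hat w_n \to \hat w$ a.e.\ gives $F(x, r_n e, v_n)/t_n^2 \to h(x)|\hat w(x)|^2$ pointwise there. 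Since $F\geq 0$, Fatou's lemma yields $\liminf \int F/t_n^2\,dx \geq \int h|\hat w|^2\,dx$. If $\hat r > 0$ the strict inequality (\ref{ese5}) gives $\int h|\hat w|^2 \geq \hat r^{\,2}\int h e^2 > \hat r^{\,2} > \hat r^{\,2} - \tfrac{1}{2}$; if $\hat r = 0$ then already $\hat r^{\,2} - \tfrac{1}{2} = -\tfrac{1}{2} < 0 \leq \int h|\hat w|^2$. In every case $\limsup I(r_n e, v_n)/t_n^2 < 0$, contradicting $I(r_n e, v_n) > 0$ and fixing an admissible $r_1$.

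The classical fact that $S$ and $\partial Q$ link across the orthogonal splitting $E = E_1 \oplus E_2$ (with the one-dimensional $e$-axis lying in $E_1$) finishes the verification of $(I_3)$ with $\omega = 0$. The main obstacle is Step 3: converting the asymptotic quadratic equivalence $F(x,s,t) \sim h(x)|(s,t)|^2$ into a strictly coercive bound that wins against the positive block $\hat r^{\,2}$ on the unit sphere --- this is exactly where the strict inequality $a_0 > \lambda_1^1$ in $(V)$ is indispensable, because it propagates via (\ref{ese5}) into the strict gain $\int h e^2 > 1$ needed to close the contradiction.
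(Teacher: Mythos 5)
Your proof is correct and follows essentially the same route as the paper: the same smallness estimate on $S$ via (\ref{ese2}), the same decomposition of $\partial Q$ into the face $\{(0,v)\}$ and the cap, and the same normalized contradiction argument in which the strict inequality $\int_{\mathbb{R}^N} h(x)e^2(x)\,dx>1$ coming from $a_0>\lambda_1^1$ (inequality (\ref{ese5})) closes the argument. The only (harmless) deviations are that you invoke Fatou's lemma where the paper uses dominated convergence to identify the limit of $\int F/t_n^2$, and that you dispose of the case $\hat r=0$ directly rather than first deducing $\hat r>0$ from $\hat r_n^{\,2}>\tfrac12$.
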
	
\begin{proof}
	Note that $S\subset E_1$, then from (\ref{FI}) and (\ref{ese2}), for all $(u_1,0) \in S$, it yields
	\begin{eqnarray}\label{esl3e1}
	I(u_1,0) &\geq& \dfrac{1}{2}\rho^2 - \Big(\varepsilon||(u_1,0)||^2_E + C_\varepsilon||(u_1,0)||^p_{E}\Big)\nonumber\\
	&=& \rho^2\left[\left(\dfrac{1}{2}-\varepsilon \right) - C_{\varepsilon}\rho^{p-2}\right]\nonumber \\
	&\geq&
	\alpha>0,
	\end{eqnarray}
	where $\varepsilon, \rho$ are chosen small enough.
	Therefore, from (\ref{esl3e1}), $(I_3) \ (i)$ holds for $I$.
	
	\hspace{0,75cm}With the purpose of proving that \ $I$ satisfies \ $(I_3) \ (ii)$ in \ Theorem \ref{ALT},\  with $\omega = 0$, \ observe that \ $I(0,v)\leq 0$, \ for all $(0,v) \in E_2$, then it suffices to show that $I(re,v)\leq 0$ for $r>0, \ (0,v) \in E_2$ and $||(re,v)||_E\geq r_1$, for some $r_1>0$ \ large enough. \ Arguing indirectly assume that some sequence $(r_ne,v_n)\subset \mathbb{R}^+e \oplus E_2$ satisfies $ ||(r_ne,v_n)||_E\to +\infty$ and $I(r_ne,v_n)>0$ for all $n \in \mathbb{N}$. Seeking a contradiction, set $$ (s_ne,w_n):= \dfrac{(r_ne,v_n)}{||(r_ne,v_n)||_E},$$
	where $ s_n \in \mathbb{R}^+, (0,w_n) \in E_2$ and $||(s_ne,w_n)||_E=1$. Provided that $(s_ne,w_n)$ is bounded, up to subsequences, it follows that $(s_ne,w_n) \rightharpoonup (se,w)$ in $E$, hence $(s_ne,w_n)\to (se,w)$ in $L^2_{h(x)}(\mathbb{R}^N,\mathbb{R}^2)$, due to the compact embedding $E \hookrightarrow L^2_{h(x)}(\mathbb{R}^N,\mathbb{R}^2)$. Then, up to subsequences, $(s_ne(x),w_n(x)) \to (se(x),w(x))$ almost everywhere in $\mathbb{R}^N$, $s_n \to s$ in $\mathbb{R}^+$ and $(0,w_n) \rightharpoonup (0,w)$ in $E_2$, since $s_n$ and $(0,w_n)$ are also bounded. Noting that
	$$1 = ||(s_ne,w_n)||_E^2= s_n^2 +||(0,w_n)||_E^2,$$
	it follows that $0\leq s^2_n\leq 1$, and it yields
	\begin{eqnarray}\label{esl3e2}
	\dfrac{I(r_ne,v_n)}{||(r_ne,v_n)||_E^2} &=& \dfrac{1}{2}\Big({s^2_n}||(e,0)||_E^2 - ||(0,w_n)||_E^2\Big) - \int_{\mathbb{R}^N}\dfrac{F(x,r_ne(x),v_n(x))}{||(r_ne,v_n)||_E^2}\ dx\nonumber\\
	&=& s^2_n - \dfrac{1}{2} - \int_{\mathbb{R}^N}\dfrac{F(x, r_ne(x),v_n(x))}{||(r_ne,v_n)||_E^2}\ dx  >0,
	\end{eqnarray}
	hence $0<s\leq1$. 
	Moreover, from (\ref{ese5}) one has $1< \displaystyle\int_{\mathbb{R}^N}h(x)|(e(x),0)|^2 \ dx.$ Then, 
	\begin{eqnarray}\label{esl3e3}
	0&>&s^2-s^2\displaystyle\int_{\mathbb{R}^N}h(x)|(e(x),0)|^2 \ dx\nonumber\\
	&>& s^2\left(1 - \displaystyle\int_{\mathbb{R}^N}h(x)|(e(x),0)|^2 \ dx\right) - \dfrac{1}{2} - \displaystyle\int_{\mathbb{R}^N}h(x)|(0,w(x))|^2 \ dx.
	\end{eqnarray}
	On the other hand, since $(s_ne,v_n)$ is convergent in $L^2_{h(x)}(\mathbb{R}^N,\mathbb{R}^2)$, there exists some $(\varphi_2,\psi_2) \in L^2_{h(x)}(\mathbb{R}^N,\mathbb{R}^2)$ such that $|s_ne(x)|\leq \varphi_2(x)$ and $|w_n(x)|\leq \psi_2(x)$ almost everywhere in $\mathbb{R}^N$ and from (\ref{ese1}) with $p=2$, it follows that
	$$\left|\dfrac{F(x ,r_ne(x),v_n(x))}{||(r_ne,v_n)||_E^2}\right|\leq \tilde{C}h(x)|(s_ne(x),w_n(x))|^2\leq \tilde{C}h(x)|(\varphi_2(x),\psi_2(x))|^2,$$
	almost everywhere in $\mathbb{R}^N$, for some constant $\tilde{C}>0$. Provided that $||(r_ne, v_n)||_E\to +\infty$, and $(s_ne(x),w_n(x))\to (se(x),w(x))\not= 0$, almost everywhere in $supp(e)$,  it follows that $(r_ne(x),v_n(x))=(s_ne(x),w_n(x))||(r_ne,v_n)||_E\to +\infty$ almost everywhere in $supp(e)$, as $n\to +\infty$, hence in view of $(F_1)$ one has
	\begin{eqnarray*}
	\dfrac{F(x,r_ne(x),v_n(x))}{||(r_ne,v_n)||_E^2} &=& \dfrac{F\big(x,||r_ne+u_n||_E(s_ne(x),w_n(x)\big)|(s_ne(x),w_n(x))|^2}{||r_ne+u_n||_E^2|(s_ne(x),w_n(x))|^2}\\
	&=& h(x)|(se(x),w(x))|^2 + o_n(1),
	\end{eqnarray*}
	almost everywhere in $supp(se,w)$ as $n \to +\infty$. Note that, $supp(se,w) \not=\emptyset$, because $supp(e)\not= \emptyset$. Thus, by Lebesgue Dominated Convergence Theorem,
	
	$$\int_{\mathbb{R}^N}\dfrac{F(x,r_ne(x),v_n(x))}{||(r_ne,v_n)||_E^2}dx \to \int_{\mathbb{R}^N}h(x)\big|(se(x), w(x))\big|^2 \ dx,$$\\
	as $n \to +\infty.$ From (\ref{esl3e2}) one has
	$$s^2_n - \dfrac{1}{2} - \int_{\mathbb{R}^N}\dfrac{F(x, r_ne(x),v_n(x))}{||(r_ne,v_n)||_E^2}\ dx >0.$$	
	Passing to the limit as $n \to +\infty$, it yields
	\begin{eqnarray}\label{esl3e4}
	0&\leq& s^2 - \dfrac{1}{2} - \int_{\mathbb{R}^N}h(x)\big|(se(x), w(x))\big|^2 \ dx \nonumber\\ &=& s^2\left(1 - \displaystyle\int_{\Omega}h(x)|(e(x),0)|^2 \ dx\right) - \dfrac{1}{2} - \displaystyle\int_{\Omega}h(x)|(0,w(x))|^2 \ dx,\\
	\nonumber
	\end{eqnarray}
	which is contrary to (\ref{esl3e3}). Therefore the result holds.
\end{proof}

\subsection{Boundedness of Cerami Sequences}

\hspace{2.5cm}

\hspace{0.75cm}Next lemma ensures $I$ satisfies last hypothesis in Theorem \ref{ALT}. Finally, with this result it will be possible to prove Theorem \ref{est1}.

\begin{lemma}	\label{esl4}
	Assuming that $(V_1)_i-(V_3)_i$, $(V)$ and $(F_1)-(F_2)$ hold, $I$ satisfies $(I_4)$.	
\end{lemma}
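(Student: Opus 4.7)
The plan is to argue by contradiction. Suppose a Cerami sequence $(u_n,v_n)\subset E$ satisfies $I(u_n,v_n)$ bounded, $(1+\|(u_n,v_n)\|_E)\|I'(u_n,v_n)\|_{E^*}\to 0$, yet $\|(u_n,v_n)\|_E\to+\infty$. The Cerami hypothesis immediately forces $I'(u_n,v_n)(u_n,v_n)\to 0$, so the algebraic identity
$$2I(u_n,v_n) - I'(u_n,v_n)(u_n,v_n) = \int_{\mathbb{R}^N} Q(x, u_n(x), v_n(x))\,dx$$
furnishes a uniform $O(1)$ bound on $\int_{\mathbb{R}^N}Q(x, u_n, v_n)\,dx$. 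Normalizing $(\tilde u_n,\tilde v_n):=(u_n,v_n)/\|(u_n,v_n)\|_E$ and invoking the compact embedding $E \hookrightarrow L^2_{h(x)}(\mathbb{R}^N,\mathbb{R}^2)$, I would extract along a subsequence a weak limit $(\tilde u_n,\tilde v_n)\rightharpoonup(\tilde u,\tilde v)$ in $E$, with strong convergence in $L^2_{h(x)}(\mathbb{R}^N,\mathbb{R}^2)$ and pointwise a.e.\ convergence.

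Next I would split into two cases. If $(\tilde u,\tilde v)\not\equiv 0$, then on the positive-measure set $\Omega_0 := \{x: (\tilde u(x),\tilde v(x))\neq 0\}$ one has $|(u_n(x),v_n(x))|=\|(u_n,v_n)\|_E\,|(\tilde u_n(x),\tilde v_n(x))|\to+\infty$ pointwise, so by $(F_2)$, $Q(x,u_n,v_n)\to+\infty$ a.e.\ on $\Omega_0$. Hypothesis $(F_1)$ yields a local lower bound on $Q$ (of the form $Q\geq -Ch(x)|(s,t)|^2$ for bounded $|(s,t)|$) which, together with the uniform divergence at infinity provided by $(F_2)$, makes Fatou's lemma applicable on $\Omega_0$, forcing $\int Q(x,u_n,v_n)\,dx\to+\infty$ and contradicting the $O(1)$ bound above.

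The main obstacle is the remaining case $(\tilde u,\tilde v)\equiv 0$, in which no pointwise divergence of $|(u_n,v_n)|$ is available. I would close it by exploiting the Cerami condition componentwise. Testing $I'(u_n,v_n)$ against $(u_n,0)\in E_1$ and against $(0,v_n)\in E_2$, each of norm bounded by $\|(u_n,v_n)\|_E$, and dividing by $\|(u_n,v_n)\|_E^2$, I obtain
$$\|(\tilde u_n,0)\|_E^2 = \int_{\mathbb{R}^N}\frac{F_u(x,u_n,v_n)\,u_n}{\|(u_n,v_n)\|_E^2}\,dx+o(1),\qquad \|(0,\tilde v_n)\|_E^2 = -\int_{\mathbb{R}^N}\frac{F_v(x,u_n,v_n)\,v_n}{\|(u_n,v_n)\|_E^2}\,dx+o(1).$$
Deriving from $(F_1)$ and the $C^1$ regularity of $F$ the linear-at-infinity estimate $|F_s(x,s,t)|+|F_t(x,s,t)|\leq Ch(x)|(s,t)|$ (which I expect to be the technically delicate step, obtained via a mean value argument comparing $F$ at $(s,t)$ and $(0,t)$ or $(s,0)$), each right-hand integral is controlled by $C\int h(x)|(\tilde u_n,\tilde v_n)|\,|(\tilde u_n,0)|\,dx$ (respectively with $|(0,\tilde v_n)|$); Cauchy--Schwarz together with the strong convergence $(\tilde u_n,\tilde v_n)\to 0$ in $L^2_{h(x)}$ drives both to zero. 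Consequently $\|(\tilde u_n,0)\|_E^2,\|(0,\tilde v_n)\|_E^2\to 0$, contradicting $\|(\tilde u_n,0)\|_E^2+\|(0,\tilde v_n)\|_E^2=\|(\tilde u_n,\tilde v_n)\|_E^2=1$, which closes $(I_4)$.
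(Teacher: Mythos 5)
Your proof is correct and follows essentially the same strategy as the paper: normalize the allegedly unbounded Cerami sequence, test $I'$ against the two components and divide by $\|(u_n,v_n)\|_E^2$ to rule out the vanishing of the weak limit $(\tilde u,\tilde v)$ (the paper does this by subtracting the two tested identities to obtain $\int 2h(\tilde u^2-\tilde v^2)\,dx=1$, whereas you keep them separate and contradict $\|(\tilde u_n,\tilde v_n)\|_E=1$ directly, which amounts to the same mechanism), and then use $(F_2)$ together with the identity $I-\tfrac12 I'(u_n,v_n)(u_n,v_n)=\tfrac12\int Q$ on the non-vanishing set to reach the final contradiction. The gradient bound $|F_s|+|F_t|\le Ch(x)|(s,t)|$ that you flag as delicate is exactly the estimate the paper also extracts from $(F_1)$ (via L'H\^opital in Lemma \ref{esl2}), so your argument matches the paper's level of rigor there as well.
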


\begin{proof}
	Let $b>0$ be \ an \ arbitrary \ constant, \ and \ take \ $(u_n,v_n) \subset I^{-1}([c-b,c+b])$ \ such that $\big(1+||(u_n,v_n)||_E\big)||I'(u_n,v_n)||_{E'}\to0$, it is necessary to show that $(u_n,v_n)$ is bounded. Suppose by contradiction that $||(u_n,v_n)||_E\to +\infty$, up to subsequences. Setting $(\tilde{u}_n,\tilde{v}_n):=\dfrac{(u_n,v_n)}{||(u_n,v_n)||_E}$, it is bounded, hence $(\tilde{u}_n,\tilde{v}_n) \rightharpoonup (\tilde{u},\tilde{v})$ in $E$ and due to the compact embeddings $E\hookrightarrow L^s_{h(x)}(\mathbb{R}^N,\mathbb{R}^2)$ for $2\leq s<+\infty$ if $N=2$ and $2\leq s < 2^\#$ if $N>2$, it implies that $(\tilde{u}_n,\tilde{v}_n) \to (\tilde{u},\tilde{v})$ in $L^s_{h(x)}(\mathbb{R}^N,\mathbb{R}^2)$. Writing $(u_n,v_n)= (u_n,0)+(0,v_n) \in E_1\oplus E_2$, it satisfies
	\begin{eqnarray}\label{esl4e1}
	o_n(1)&=& \dfrac{1}{||(u_n,v_n)||_E}I'(u_n,v_n)(\tilde{u}_{n},0)\nonumber\\
	&=&||(\tilde{u}_{n},0)||_E^2 - \int_{\mathbb{R}^N}\dfrac{F_u(x,u_n(x),v_n(x))}{||(u_n,v_n)||_E}\tilde{u}_n(x) \ dx \nonumber\\
	&=&||(\tilde{u}_{n},0)||_E^2 - \int_{\mathbb{R}^N}\dfrac{F_u(x,u_n(x),v_n(x))}{u_n(x)}\tilde{u}^2_n(x) \ dx
	\end{eqnarray}
	and
	\begin{eqnarray}\label{esl4e2}
	o_n(1)&=& \dfrac{1}{||(u_n,v_n)||_E}I'(u_n,v_n)(0,\tilde{v}_{n})\nonumber\\
	&=&-||(0,\tilde{v}_{n})||_E^2 - \int_{\mathbb{R}^N}\dfrac{F_v(x,u_n(x),v_n(x))}{v_n(x)}\tilde{v}^2_n(x) \ dx. \\
	\nonumber
	\end{eqnarray}
	Subtracting (\ref{esl4e2}) from (\ref{esl4e1}), and using that $1 = ||(\tilde{u}_n,0)||_E^2 + ||(0,\tilde{v}_n)||_E^2$, it yields\\
	\begin{equation}\label{esl4e3}
	o_n(1)=1 - \int_{\mathbb{R}^N}\left[\dfrac{F_u(x,u_n(x),v_n(x))}{u_n(x)}\tilde{u}^2_n(x) - \dfrac{F_v(x,u_n(x),v_n(x))}{v_n(x)}\tilde{v}^2_n(x) \right]dx.\\
	\end{equation}\\
	Provided that $(\tilde{u}_n,\tilde{v}_n)\to(\tilde{u},\tilde{v})$ in $L_{h(x)}^2(\mathbb{R}^N,\mathbb{R}^2)$, there \ exist \ $(\varphi_2,\psi_2) \in L^2_{h(x)}(\mathbb{R}^N,\mathbb{R}^2)$ \ such \ that $|\tilde{u}_n(x)|\leq \varphi_2(x)$  and $|\tilde{v}_n(x)|\leq \psi_2(x)$, almost everywhere in $\mathbb{R}^N$. Furthermore, from $(F_1)$ it follows that
	
	$$\left|\dfrac{F_u(x,u_n(x),v_n(x))}{u_n(x)}\right|\leq Ch(x)\quad \text{and}\quad \left|\dfrac{F_v(x,u_n(x),v_n(x))}{v_n(x)}\right|\leq Ch(x),$$
	for some constant $C>0$, almost everywhere in $\mathbb{R}^N$. Hence,
	\small{
		\begin{eqnarray*}
			\left|\dfrac{F_u(x,u_n(x),v_n(x))}{u_n(x)}\tilde{u}^2_n(x)- \dfrac{F_v(x,u_n(x),v_n(x))}{v_n(x)}\tilde{v}^2_n(x) \right|
			&\leq& Ch(x)\Big(\tilde{u}^2_n(x) + \tilde{v}^2_n(x)\Big) \nonumber\\
			&\leq&Ch(x)\Big(\varphi^2_2(x) + \psi^2_2(x)\Big).
		\end{eqnarray*}
	}\normalsize
	Since $(\tilde{u}_n,\tilde{v}_n) \to (\tilde{u},\tilde{v})$ in $L^2_{h(x)}(\mathbb{R}^N,\mathbb{R}^2)$, $|(u_n(x),v_n(x))|\to +\infty$, for all $x \in \mathbb{R}^N$ such that $(\tilde{u}(x),\tilde{v}(x))\not=0$, then from $(F_1)$ and L'Hospital rule, it follows that
	\begin{equation*}
	\dfrac{F_u(x,u_n(x),v_n(x))}{u_n(x)}\tilde{u}^2_n(x)- \dfrac{F_v(x,u_n(x),v_n(x))}{v_n(x)}\tilde{v}^2_n(x) \to 2h(x)\left[\tilde{u}^2(x)-\tilde{v}^2(x)\right],
	\end{equation*}
	as $n \to +\infty$, for all $x \in \mathbb{R}^N$ $(\tilde{u}(x),\tilde{v}(x))\not=0$. Therefore, by Lebesgue Dominated Convergence Theorem one has
	\small{
		\begin{equation} \label{esl4e4}
		\int_{\mathbb{R}^N}\left[\dfrac{F_u(x,u_n(x),v_n(x))}{u_n(x)}\tilde{u}^2_n(x) - \dfrac{F_v(x,u_n(x),v_n(x))}{v_n(x)}\tilde{v}^2_n(x) \right]dx \to \int_{\mathbb{R}^N}2h(x)\left[\tilde{u}^2(x)-\tilde{v}^2(x)\right]dx.
		\end{equation}
	}\normalsize
	Hence, passing to the limit in (\ref{esl4e3}) as $n \to +\infty$, it yields
	\begin{equation}\label{esl4e5}
	\int_{\mathbb{R}^N}2h(x)\left[\tilde{u}^2(x)-\tilde{v}^2(x)\right]dx = 1,
	\end{equation}
	which implies there exists $\Omega \subset \mathbb{R}^N$ such that $|\Omega|>0$ and $(\tilde{u}(x),\tilde{v}(x))\not=0$ for all $x \in \Omega$, otherwise (\ref{esl4e5}) yields a contradiction. 
	
	\hspace{0.75cm}On the other hand, since $(u_n,v_n)$ is a Cerami sequence, there exists a constant $M_0>0$ such that
	\begin{eqnarray}\label{esl4e6}
	M_0 &\geq& I(u_n,v_n) - \dfrac{1}{2}I'(u_n,v_n)(u_n,v_n)\nonumber\\
	&=& \dfrac{1}{2} \int_{\mathbb{R}^N} \Big(F_u(x,u_n,v_n)u_n - F_v(x,u_n,v_n)v_n - 2F(x,u_n,v_n)\Big)\,dx\nonumber\\
	&\geq&\dfrac{1}{2} \int_{\Omega} Q(x,u_n(x),v_n(x))\ dx.
	\end{eqnarray}
	Provided that  $|(u_n(x),v_n(x))|\to +\infty$, for all $x \in \Omega$, in view of $(F_2)$, it yields
	$$\dfrac{1}{2} \int_{\Omega} Q(x,u_n(x),v_n(x))\ dx \to +\infty,$$
	as $n \to +\infty.$ Passing (\ref{esl4e6}) to the limit, a contradiction holds.
\end{proof}

\hspace{0.75cm}At long last, the main result is proved.

\begin{proof}[Proof of Theorem \ref{est1}]
	Provided that $I$ satisfies all assumptions $(I_1)-(I_4)$ in Theorem \ref{ALT}, it ensures a critical point  $(u,v) \in E$ of $I$, with $I(u,v)=c\geq\alpha>0$. It implies that $I'(u,v)(\varphi,\psi) = 0$, for all $(\varphi,\psi) \in E$ and $I(u,v) >=I(0,0)$, thus $(u,v)\not=0$. Since $I\in C^1(E,\mathbb{R})$, it yields that $(u,v)$ is a nontrivial solution for system (\ref{ES}).
\end{proof}

\section{Super Quadratic Elliptic Systems}

\hspace{0.75cm}The following assumptions on the Hamiltonian $F$ are required for the super quadratic case.\\

\hspace{-0.4cm}$(F'_1)$ \ $F(x,s,t) \in C^1(\mathbb{R}^N\times\mathbb{R}^2,\mathbb{R}^+)$,
$$\dfrac{\left|\big(F_s(x,s,t),F_t(x,s,t)\big)\right|}{h(x)} = o(|(s,t)|) \quad \text{as} \quad |(s,t)|\to 0, \quad \text{uniformly \ in} \quad x,$$
and for some $2< p <+\infty$ if $N=2$ or $2< p <2^\#:= \dfrac{4}{\alpha(N-2)}$ if $N>2$,
$$\left|\big(F_s(x,s,t),F_t(x,s,t)\big)\right|\leq c_0h(x)\big(1+|(s,t)|^{p-1}\big) , \quad \text{for \ all} \quad (x,s,t)\in \mathbb{R}^N \times\mathbb{R}^2;$$

\hspace{-0.8cm} \ \ $(F'_2)$ $\displaystyle\lim_{|(s,t)| \to +\infty}\dfrac{F(x,s,t)}{|(s,t)|^2}= +\infty, \quad \text{uniformly \ in}\quad x.$\\

\hspace{0.75cm}Note that for such a $p$ given in $(F'_1)$ and arbitrary $\varepsilon>0$, there exist constants $C>0, \ \tilde{C}_{\varepsilon}>0$ such that (\ref{ese1}) and (\ref{ese2}) hold. Hence,
the functional
$$\int_{\mathbb{R}^N}\big|F(x,u(x),v(x))\big|\ dx$$
belongs to $C^1(E,\mathbb{R})$
(cf. \cite{Ra}, Lemma 2.2 and also \cite{Sir}, Lemma 3.1).

\hspace{0.75cm}In addition, for $Q(x,u,v)$ defined as in $(F_2)$, so as to ensure the boundedness of Cerami sequences, the following condition is required.\\

\hspace{-0.7cm} \ $(F'_3)$  There exists a constant $D\geq1$ such that 
$$Q(x,w,z)\leq DQ(x,u,v) ,$$
for all $(u,v), (w,z) \in \mathbb{R}^2$ with $ |(w,z)|\leq |(u,v)|$.
\vspace{0.5cm}

\hspace{0.75cm}It is worth to point out that hypothesis $(F'_3)$ was introduced by Jeanjean and Tanaka \cite{JeTa}, in order to bound Cerami sequences in cases where the so called Ambrosetti-Rabinowitz condition \cite{AR} is not satisfied by the nonlinearity, for example, when	$F(x,s,t) = c_0h(x)(\log|(s,t)|)^{\frac{1}{2}}(s^2,t^2)$, for $|(s,t)|\geq1$. An alternative condition for the boundedness is mentioned at the end of this work.

\hspace{0.75cm}Similarly to the asymptotically quadratic case, the relation between $(V)$ and $(F'_1)$ shows that the growth of $F$ is controlled by the growth of $V_i$ by means of $h$, hence $F$ can be unbounded but under some restrictions. For instance, a function $F$ such that $$\Big(F_s(x,s,t),F_t(x,s,t)\Big) = c_0h(x)\left(|(s,t)|^{p-2}s,|(s,t)|^{p-2}t\right),$$
where $h(x)\geq a_0 >\lambda^1_1$, \  $V_i(x)\geq \big[h(x)\big]^{\alpha}$ for $|x|>r_0$, with $c_0, \ a_0, \ r_0$ positive constants and $\alpha >1$, satisfy assumptions $(V)$ and $(F_1)$.

\hspace{0.75cm}Now, the main result of this section is stated.

\begin{theorem}\label{est2}
	Suppose that $(V_1)_i-(V_3)_i, (V)$ and $(F'_1)-(F'_3)$ hold, then system (\ref{ES}) has a nontrivial solution in $H^1(\mathbb{R}^N,\mathbb{R}^2).$	
\end{theorem}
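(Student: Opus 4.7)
The plan is to apply the Abstract Linking Theorem (Theorem \ref{ALT}) to the same functional $I$ of (\ref{FI}), with the identical decomposition $E = E_1 \oplus E_2$, operator $L$, and linking configuration from Section 2: $S = \partial B_\rho \cap E_1$ and $Q = \{(re, v) : r \geq 0,\ (0, v) \in E_2,\ \|(re, v)\|_E \leq r_1\}$, with $e$ the normalized first eigenfunction of $A_1$. Conditions $(I_1)$ and $(I_2)$ transfer with only cosmetic changes: $(I_1)$ is identical, while for $(I_2)$, since $(F'_1)$ still yields estimates (\ref{ese1}) and (\ref{ese2}) with $p \in (2, 2^\#)$, the proofs of Lemmas \ref{esl1}--\ref{esl2} carry over using the majorant $c_0 h(x)(1 + |(s,t)|^{p-1})$ and the compact embedding $E \hookrightarrow L^s_{h(x)}$.

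The linking geometry $(I_3)$ also follows the same template. The bound $I|_S \geq \alpha > 0$ is immediate from (\ref{ese2}) together with $p > 2$, exactly as in (\ref{esl3e1}). For $I|_{\partial Q} \leq 0$, I repeat the contradiction argument of Lemma \ref{esl3}: along $(r_n e, v_n)$ with $\|(r_n e, v_n)\|_E \to \infty$ and $I(r_n e, v_n) > 0$, normalization produces a weak limit $(se, w)$ with $s > 0$; on $\supp(e)$ the pointwise magnitudes $|(r_n e(x), v_n(x))|$ diverge and $(F'_2)$ at once forces $F/|(r_n e, v_n)|^2 \to +\infty$, whence Fatou gives $\int F/\|(r_n e, v_n)\|_E^2 \to +\infty$, contradicting positivity of $I(r_n e, v_n)/\|(r_n e, v_n)\|_E^2$. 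This step is actually simpler than in the asymptotic case, since $(F'_2)$ directly supplies the required blowup.

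The real obstacle is $(I_4)$: boundedness of Cerami sequences without the Ambrosetti--Rabinowitz condition, handled via $(F'_3)$ in the spirit of Jeanjean--Tanaka. Suppose a Cerami sequence $(u_n, v_n)$ satisfies $\|(u_n, v_n)\|_E \to \infty$, and set $(\tilde u_n, \tilde v_n) := (u_n, v_n)/\|(u_n, v_n)\|_E \rightharpoonup (\tilde u, \tilde v)$, with strong convergence in $L^s_{h(x)}$. If $(\tilde u, \tilde v) \not\equiv 0$ on a set of positive measure, the argument of Lemma \ref{esl4} adapts verbatim with $(F'_2)$ playing the role of $(F_2)$: pointwise $F(x, u_n, v_n)/|(u_n, v_n)|^2 \to +\infty$ on that set, and Fatou contradicts the bound $\int F/\|(u_n, v_n)\|_E^2 \leq 1/2 + o(1)$ extracted from $I(u_n, v_n) \to c$. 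If instead $(\tilde u, \tilde v) \equiv 0$ a.e., I invoke the Jeanjean--Tanaka device. The identity
\[
I(w, z) - \tfrac{1}{2}I'(w, z)(w, z) = \tfrac{1}{2}\int_{\mathbb{R}^N} Q(x, w, z)\,dx,
\]
together with the pointwise monotonicity $Q(x, w, z) \leq D\, Q(x, u_n, v_n)$ supplied by $(F'_3)$ whenever $|(w, z)| \leq |(u_n, v_n)|$, yields $\int Q(x, s u_n, 0)\,dx \leq 2Dc + o(1)$ uniformly in $s \in [0, 1]$. Maximizing $s \mapsto I(s u_n, 0)$ on $[0, 1]$ at some $s_n \in (0, 1)$ (interior, since at $s = 0$ the value is $0$ and the test choice $s = R/\|(u_n, v_n)\|_E$ yields $I(R\tilde u_n, 0) \to R^2/4$ once the Cerami identity $I'(u_n, v_n)(u_n, 0) = o(1)$ and $F \geq 0$ force $\|(\tilde u_n, 0)\|_E^2 \to 1/2$), the critical relation $I'(s_n u_n, 0)(s_n u_n, 0) = 0$ gives $I(s_n u_n, 0) = \tfrac{1}{2}\int Q(x, s_n u_n, 0)\,dx \leq Dc + o(1)$. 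Choosing $R > 2\sqrt{Dc}$ forces $R^2/4 \leq Dc$, producing the required contradiction. Once $(I_4)$ is verified, Theorem \ref{ALT} delivers a critical point of $I$ at level $c \geq \alpha > 0$, which is a nontrivial solution of (\ref{ES}).
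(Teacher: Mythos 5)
Your treatment of $(I_1)$, $(I_2)$, the linking geometry $(I_3)$, and the non-vanishing case of $(I_4)$ coincides with the paper's. The gap is in the vanishing case of $(I_4)$ (when $(\tilde u,\tilde v)\equiv 0$ a.e.). You maximize $s\mapsto I(su_n,0)$ over $[0,1]$ and claim the maximum is attained at an interior point $s_n\in(0,1)$, but your justification only rules out $s_n=0$, not $s_n=1$. If $s_n=1$ you lose the critical relation $I'(s_nu_n,0)(s_nu_n,0)=0$ (at the endpoint only a one-sided derivative inequality holds), so you cannot write $I(s_nu_n,0)=\tfrac12\int Q(x,s_nu_n,0)\,dx$ and invoke $(F'_3)$; and, unlike the classical Jeanjean--Tanaka setting, the endpoint value $I(u_n,0)=\tfrac12\|(u_n,0)\|_E^2-\int F(x,u_n,0)\,dx$ is \emph{not} $I(u_n,v_n)\to c$: it involves only the first component and is a priori unbounded (indeed $\|(u_n,0)\|_E^2\geq\tfrac12\|(u_n,v_n)\|_E^2\to+\infty$ while $F\geq0$ gives no matching upper control), so no contradiction with the lower bound $R^2/4$ is available. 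This endpoint case is precisely what the paper's proof of Lemma \ref{esl5} is built to handle: it works with the two-parameter family $U_n^\varepsilon=(t_n^\varepsilon u_n,\varepsilon t_n^\varepsilon v_n)$, so that when the maximizer sits at the endpoint $t_n^\varepsilon=1$ the value $I(u_n,\varepsilon v_n)$ can be compared, via continuity in $\varepsilon$ and a diagonal choice $\tilde\varepsilon_n\to1$, with $I(u_n,v_n)\to c$, which is bounded, and this is then played against $\liminf_nI(U_n^\varepsilon)=+\infty$. Your one-component surrogate $(su_n,0)$ discards exactly the information needed for that comparison, so as written the argument does not close.

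A secondary slip: you assert that $I'(u_n,v_n)(u_n,0)=o(1)$ together with $F\geq0$ forces $\|(\tilde u_n,0)\|_E^2\to\tfrac12$. What actually follows (from $I(u_n,v_n)\geq c-b>0$, $F\geq0$ and the normalization $\|(\tilde u_n,0)\|_E^2+\|(0,\tilde v_n)\|_E^2=1$) is the inequality $\|(\tilde u_n,0)\|_E^2>\tfrac12$, which is all that is needed for $I(R\tilde u_n,0)\geq R^2/4+o(1)$; the exact limit $1/2$ is neither proved nor required.
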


\hspace{0.75cm}Under the purpose of proving Theorem \ref{est2}, the variational structure established in Subsection 2.1. is assumed here. Thereby, $I$ satisfies $(I_1)$ in Theorem \ref{ALT}, which must be applied so as to get a non trivial critical point of $I$. Furthermore, the conclusions of Lemmas \ref{esl1} and \ref{esl2}, in Subsection 2.2, are exactly the same replacing $(F_1)$ by $(F'_1)$. Thus, $(I_2)$ is satisfied by $I$ as well.

\subsection{Linking Structure}

\hspace{2.5cm}

\hspace{0.75cm}In order to prove that $I$ satisfies $(I_3)$ in Theorem \ref{ALT}, all arguments in Subsection 2.3 are adapted to the super quadratic case. As before,
$$S:= (\partial B_{\rho}\cap E_1) \quad \text{and} \quad
{Q:= \{(re,v): r\geq0, (0,v) \in E_2, ||(re,v)||_E\leq r_1\}},$$
where $0<\rho < r_1$ are constants and $(e,0)\in E_1$ is an arbitrary vector
with $||(e,0)||_E=1$, therefore, such $S$ and $Q$ ``link''.
Next lemma shows that if $r_1>0$ is large enough, then $I|_S\geq \alpha >0$ and $I|_{\partial Q}\leq 0$ for some $\alpha>0$. Thus, $I$ satisfies $(I_3)$ for some $\alpha>0, \ \omega=0$ and arbitrary $(0,v) \in E_2$.

\begin{lemma} \label{esl32}
	Assume that $(V_1)_i-(V_3)_i$, $(V)$ and $(F'_1)-(F'_2)$ hold, then $I$ satisfies $(I_3)$.	
\end{lemma}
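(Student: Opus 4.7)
The plan is to mirror the three-step structure used for Lemma~\ref{esl3}, but with two simplifications coming from the super quadratic regime: the exponent in $(F_1')$ is $p>2$, so the geometry near $S$ is cleaner, and $(F_2')$ gives genuine superquadratic growth of $F$ at infinity, so a Fatou-type blow-up replaces the more delicate $L^2_{h(x)}$ bookkeeping needed before. Throughout, $e\in E_1$ is any fixed unit vector (no eigenvalue role is needed here), and one uses that (\ref{ese2}) remains valid under $(F_1')$ for the given $p$, and that $F\ge 0$.

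For $(I_3)(i)$: given $(u_1,0)\in S=\partial B_\rho\cap E_1$, the same chain as in (\ref{esl3e1}) gives
\[
I(u_1,0)=\tfrac12\rho^2-\int_{\mathbb{R}^N}F(x,u_1,0)\,dx\;\ge\;\rho^2\Bigl[\tfrac12-\varepsilon-C_\varepsilon\rho^{p-2}\Bigr].
\]
Because $p>2$, choosing $\varepsilon<\tfrac14$ and then $\rho>0$ small enough yields $I|_S\ge\alpha>0$ for some $\alpha>0$. This step requires no analogue of the strict inequality $a_0>\lambda_1^1$ that was essential in the asymptotic case.

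For $(I_3)(ii)$ with $\omega=0$: since $F\ge 0$, $I(0,v)=-\tfrac12\|(0,v)\|_E^2-\int F\le 0$ for every $(0,v)\in E_2$. It remains to show $I(re,v)\le 0$ for all $r\ge 0$, $(0,v)\in E_2$ with $\|(re,v)\|_E\ge r_1$, once $r_1$ is chosen large enough. Suppose not: then there exists a sequence $(r_n e,v_n)$ with $\|(r_n e,v_n)\|_E\to+\infty$ and $I(r_n e,v_n)>0$. Set $(s_n e,w_n):=(r_n e,v_n)/\|(r_n e,v_n)\|_E$, extract $s_n\to s\in[0,1]$, $(0,w_n)\rightharpoonup(0,w)$ in $E_2$ and $(s_n e,w_n)\to(s e,w)$ in $L^2_{h(x)}$ and a.e. Dividing $I(r_n e,v_n)>0$ by $\|(r_n e,v_n)\|_E^2$, exactly as in (\ref{esl3e2}), and using $F\ge 0$,
\[
0\;<\;s_n^2-\tfrac12-\int_{\mathbb{R}^N}\frac{F(x,r_n e,v_n)}{\|(r_n e,v_n)\|_E^2}\,dx\;\le\;s_n^2-\tfrac12,
\]
so $s_n^2>\tfrac12$ and hence $s\ge 1/\sqrt2>0$. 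Therefore $se\not\equiv 0$, and on the set $\Omega_0:=\{x\in\supp(e):s e(x)\ne 0\}$, which has positive measure, one has $|(r_n e(x),v_n(x))|=|(s_n e(x),w_n(x))|\,\|(r_n e,v_n)\|_E\to+\infty$. Writing
\[
\frac{F(x,r_n e,v_n)}{\|(r_n e,v_n)\|_E^2}=\frac{F(x,r_n e,v_n)}{|(r_n e,v_n)|^2}\,|(s_n e,w_n)|^2,
\]
hypothesis $(F_2')$ forces the integrand to $+\infty$ a.e. on $\Omega_0$. Since $F\ge 0$, Fatou's lemma gives
\[
\liminf_{n\to\infty}\int_{\mathbb{R}^N}\frac{F(x,r_n e,v_n)}{\|(r_n e,v_n)\|_E^2}\,dx=+\infty,
\]
contradicting the upper bound $s_n^2-\tfrac12\le\tfrac12$ above.

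The step I expect to be most delicate is the Fatou argument: one must verify that $\Omega_0$ has positive measure (which uses $s>0$ together with $e\not\equiv 0$), that the pointwise convergence $(s_n e(x),w_n(x))\to(se(x),w(x))$ survives after extracting subsequences, and that the renormalised integrand is a.e. nonnegative so Fatou applies directly. Apart from this, the rest of the argument is a straightforward adaptation of Lemma~\ref{esl3}, and the freedom to pick $e$ as an arbitrary unit vector (not an eigenfunction) reflects the fact that the super quadratic growth supplies the margin that $a_0>\lambda_1^1$ provided in the asymptotic case.
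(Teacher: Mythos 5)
Your proposal is correct and follows essentially the same route as the paper's proof: the same estimate (\ref{esl3e1}) for $I|_S$, and for $\partial Q$ the same contradiction argument via the normalized sequence $(s_ne,w_n)$, the pointwise blow-up of $|(r_ne(x),v_n(x))|$ on the support of $e$, hypothesis $(F'_2)$, and Fatou's lemma against the bound $s_n^2-\tfrac12\le\tfrac12$. Your explicit derivation of $s^2\ge\tfrac12$ from $F\ge0$ and your observation that $e$ need not be an eigenfunction merely make explicit what the paper states (the latter in Remark \ref{esr3}).
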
	

\hspace{0.75cm}Although $(I_3) \ (i)$ holds for $I$ by the same argument found in Lemma \ref{esl3}, for the sake of completeness, this proof is repeated here.
\begin{proof}
	From (\ref{FI}) and (\ref{ese2}), \ for all $(u_1,0) \in S$, it yields (\ref{esl3e1}) and likewise $(I_3) \ (i)$ holds for $I$.
	
	\hspace{0,75cm}For showing \ $(I_3) \ (ii)$  with $\omega = 0$, since $I(0,v)\leq 0$, \ for all $(0,v) \in E_2$, it is enough to prove that $I(re,v)\leq 0$ for $r>0, \ (0,v) \in E_2$ and $||(re,v)||_E\geq r_1$, for some $r_1>0$ \  sufficiently \ large. Seeking a contradiction, assume that some sequence $(r_ne,v_n)\subset \mathbb{R}^+e \oplus E_2$ satisfies $ ||(r_ne,v_n)||_E\to +\infty$ and $I(r_ne,v_n)>0$ for all $n \in \mathbb{N}$. Set $$ (s_ne,w_n):= \dfrac{(r_ne,v_n)}{||(r_ne,v_n)||_E},$$
	where $ s_n \in \mathbb{R}^+, (0,w_n) \in E_2$ and $||(s_ne,w_n)||_E=1$. Provided that $(s_ne,w_n)$ is bounded, up to subsequences, it follows that $(s_ne,w_n) \rightharpoonup (se,w)$ in $E$, hence $(s_ne,w_n)\to (se,w)$ in $L^2_{h(x)}(\mathbb{R}^N,\mathbb{R}^2)$, due to the compact embedding $E \hookrightarrow L^2_{h(x)}(\mathbb{R}^N,\mathbb{R}^2)$. Then, up to subsequences, $(s_ne(x),w_n(x)) \to (se(x),w(x))$ almost everywhere in $\mathbb{R}^N$, $s_n \to s$ in $\mathbb{R}^+$ and $(0,w_n) \rightharpoonup (0,w)$ in $E_2$. Since 
	$1 = ||(s_ne,w_n)||_E^2= s_n^2 +||(0,w_n)||_E^2,$
	it follows that $0\leq s^2_n\leq 1$, and it yields
	\begin{eqnarray}\label{esl32e2}
	0<\dfrac{I(r_ne,v_n)}{||(r_ne,v_n)||_E^2} &=& \dfrac{1}{2}\Big({s^2_n}||(e,0)||_E^2 - ||(0,w_n)||_E^2\Big) - \int_{\mathbb{R}^N}\dfrac{F(x,r_ne(x),v_n(x))}{||(r_ne,v_n)||_E^2}\ dx\nonumber\\
	&=& s^2_n - \dfrac{1}{2} - \int_{\mathbb{R}^N}\dfrac{F(x, r_ne(x),v_n(x))}{||(r_ne,v_n)||_E^2}\ dx\nonumber\\
	&=& s^2_n - \dfrac{1}{2} - \int_{\mathbb{R}^N}\dfrac{F(x, r_ne(x),v_n(x))}{|(r_ne(x),v_n(x))|^2}{|(s_ne(x),w_n(x))|^2}\ dx,
	\end{eqnarray}
	hence $0<s\leq1$. 
	Provided that $||(r_ne, v_n)||_E\to +\infty$ and $$(s_ne(x),w_n(x))\to (se(x),w(x))\not= 0$$
	almost everywhere in supp$(e)$,  it follows that
	
	$$(r_ne(x),v_n(x))=(s_ne(x),w_n(x))||(r_ne,v_n)||_E\to +\infty,$$
	almost everywhere in supp$(e)$, as $n\to +\infty$, hence in view of $(F'_2)$ one has
	$$\dfrac{F(x, r_ne(x),v_n(x))}{|(r_ne(x),v_n(x))|^2}{|(s_ne(x),w_n(x))|^2} \to +\infty,$$
	\hspace{-0.1cm}almost everywhere in supp$(se,w)$ as $n \to +\infty$. Note that, $supp(se,w) \not=\emptyset$, because supp$(e)\not= \emptyset$. Thus, by Fatou's Lemma,
	\begin{equation}\label{esl32e4}
	\int_{\mathbb{R}^N}\dfrac{F(x, r_ne(x),v_n(x))}{|(r_ne(x),v_n(x))|^2}{|(s_ne(x),w_n(x))|^2}\ dx\to +\infty,
	\end{equation}
	as $n \to +\infty.$ In virtue of (\ref{esl32e2}) one has
	$$\dfrac{1}{2}\geq s^2_n - \dfrac{1}{2} > \int_{\mathbb{R}^N}\dfrac{F(x, r_ne(x),v_n(x))}{|(r_ne(x),v_n(x))|^2}{|(s_ne(x),w_n(x))|^2}\ dx.$$	
	Due to (\ref{esl32e4}), passing to the limit as $n \to +\infty$ it yields a contradiction.
\end{proof}

\begin{Remark}\label{esr3}
	Note that the assumption $a_0>\lambda_1^1$ in $(V)$, is irrelevant in the super quadratic case, since inequality (\ref{ese5}) is not used to establish the linking geometry as was done in Lemma \ref{esl3}. Therefore, for the super quadratic case, it is just necessary to require $a_0>0$ in $(V)$. Thereby, if  $F\leq0$ were considered instead of $F\geq0$, to treat the problem it would be necessary only to exchange one equation for the another, requiring hypothesis $(F'_2)$ for $-F\geq0$ instead of $F$.
\end{Remark}

\subsection{Boundedness of Cerami Sequences}

\hspace{2.5cm}

\hspace{0.75cm}Last hypothesis in Theorem \ref{ALT} is ensured in this subsection, and subsequently Theorem \ref{est2} is going to be proved.

\begin{lemma}	\label{esl5}
	Supposing that $(V_1)_i-(V_3)_i, (V),  (F'_1)-(F'_2)$ and $(F''_3)$ hold true for $I$, it satisfies $(I_4)$.
\end{lemma}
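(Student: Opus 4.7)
The plan is to argue by contradiction, paralleling Lemma \ref{esl4} but replacing the asymptotically quadratic reasoning with the super-quadratic condition $(F'_2)$ and using $(F''_3)$ in place of $(F_2)$. Assume a Cerami sequence $(u_n, v_n) \subset I^{-1}([c-b, c+b])$ satisfies $(1 + ||(u_n, v_n)||_E)||I'(u_n, v_n)||_{E'} \to 0$ and, for contradiction, $||(u_n, v_n)||_E \to +\infty$. Set $(\tilde{u}_n, \tilde{v}_n) := (u_n, v_n)/||(u_n, v_n)||_E$; by compactness of the embeddings $E \hookrightarrow L^s_{h(x)}(\mathbb{R}^N, \mathbb{R}^2)$, up to a subsequence $(\tilde{u}_n, \tilde{v}_n) \rightharpoonup (\tilde{u}, \tilde{v})$ in $E$ and strongly in every admissible weighted space.

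In the nontrivial-limit case $(\tilde{u}, \tilde{v}) \not\equiv 0$, divide (\ref{FI}) by $||(u_n, v_n)||_E^2$ and rewrite $F(x, u_n, v_n)/||(u_n, v_n)||_E^2 = \bigl[F(x, u_n, v_n)/|(u_n, v_n)|^2\bigr]\,|(\tilde{u}_n, \tilde{v}_n)|^2$. The left-hand side tends to $0$ by boundedness of $I(u_n, v_n)$; the quadratic term remains in $[-1/2, 1/2]$; and on the positive-measure set where $(\tilde{u}, \tilde{v})\neq 0$, $|(u_n, v_n)| \to +\infty$, so $(F'_2)$ together with Fatou's lemma forces the integral to diverge to $+\infty$, a contradiction.

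In the trivial-limit case $(\tilde{u}, \tilde{v}) \equiv 0$, the condition $(F''_3)$ is decisive. Cerami combined with boundedness of $I(u_n, v_n)$ yields the key estimate
\[
\int_{\mathbb{R}^N} Q(x, u_n, v_n)\, dx = 2 I(u_n, v_n) - I'(u_n, v_n)(u_n, v_n) = O(1),
\]
and $(F''_3)$ then gives $\int Q(x, t u_n, t v_n)\, dx \leq D\cdot O(1)$ uniformly for $t \in [0, 1]$. Following the Jeanjean--Tanaka strategy, set $\varphi_n(t) := I(tu_n, tv_n)$ and let $t_n \in [0, 1]$ be a maximizer. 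Using the algebraic identity $2\varphi_n(t) - t\varphi_n'(t) = \int Q(x, tu_n, tv_n)\, dx$ together with $\varphi_n'(t_n) = 0$ when $t_n$ is interior (otherwise $\varphi_n(t_n) \in \{0, I(u_n, v_n)\}$), one obtains a uniform upper bound $\max_{t \in [0, 1]} I(tu_n, tv_n) \leq C$. For each fixed $R > 0$ and $n$ large enough that $R/||(u_n, v_n)||_E \in (0, 1)$, this yields $I(R\tilde{u}_n, R\tilde{v}_n) \leq C$; on the other hand, strong convergence $(\tilde{u}_n, \tilde{v}_n) \to 0$ in $L^s_{h(x)}$ and (\ref{ese2}) give $\int F(x, R\tilde{u}_n, R\tilde{v}_n)\, dx \to 0$, hence
\[
I(R\tilde{u}_n, R\tilde{v}_n) = \tfrac{R^2}{2}\bigl(||(\tilde{u}_n, 0)||_E^2 - ||(0, \tilde{v}_n)||_E^2\bigr) + o_n(1).
\]
Choosing $R$ sufficiently large violates the upper bound $C$, provided the quadratic factor is bounded below by a positive constant along a subsequence.

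The main obstacle is the strong indefiniteness: the sign of $||(\tilde{u}_n, 0)||_E^2 - ||(0, \tilde{v}_n)||_E^2$ is not a priori fixed, so ruling out the degenerate subcase $||(0, \tilde{v}_n)||_E^2 \to 1$ requires extra work. The way to handle this is to exploit the Cerami relation $I'(u_n, v_n)(0, v_n) = o(1)$, which forces $||(0, v_n)||_E^2 = -\int F_v(x, u_n, v_n)\, v_n\, dx + o(1)$; combined with the growth bound on $F_v$ from $(F'_1)$, the strong convergence $\tilde{v}_n \to 0$ in the weighted spaces, the non-negativity $F \geq 0$, and the identity $||u_n||_E^2 - ||v_n||_E^2 = 2 I(u_n, v_n) + 2 \int F \geq 2(c - b)$, one expects $||(\tilde{u}_n, 0)||_E^2$ to dominate along a subsequence, closing the argument.
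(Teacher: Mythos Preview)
Your treatment of the case $(\tilde u,\tilde v)\not\equiv 0$ is correct and coincides with the paper's argument. The gap is in the case $(\tilde u,\tilde v)\equiv 0$, and you have in fact identified it yourself without resolving it: along the undeformed ray $t(u_n,v_n)$ the quadratic part of $I(R\tilde u_n,R\tilde v_n)$ is $\tfrac{R^2}{2}\bigl(\|(\tilde u_n,0)\|_E^2-\|(0,\tilde v_n)\|_E^2\bigr)$, and the tools you list only give that this difference is \emph{positive}, not bounded away from zero. Indeed, from $F\ge 0$ and $I(u_n,v_n)\ge c-b>0$ one gets $\|(u_n,0)\|_E^2-\|(0,v_n)\|_E^2\ge 2(c-b)$, but after dividing by $\|(u_n,v_n)\|_E^2\to\infty$ this yields only $\liminf_n\bigl(\|(\tilde u_n,0)\|_E^2-\|(0,\tilde v_n)\|_E^2\bigr)\ge 0$. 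The other route you suggest, via $I'(u_n,v_n)(0,\tilde v_n)$ and the growth bound in $(F'_1)$, produces a term of order $\|(u_n,v_n)\|_E^{p-2}\cdot\|(\tilde u_n,\tilde v_n)\|_{L^p_{h}}^{p-1}\|\tilde v_n\|_{L^p_{h}}$, which is an indeterminate product $\infty\cdot 0$ and cannot be controlled. So the final sentence ``one expects $\|(\tilde u_n,0)\|_E^2$ to dominate\dots'' is precisely where the proof is missing an idea.

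The paper's fix is to tilt the ray: for $\varepsilon\in(0,1)$ maximize $t\mapsto I(tu_n,\varepsilon t v_n)$ over $[0,1]$, with maximizer $U_n^\varepsilon=(t_n^\varepsilon u_n,\varepsilon t_n^\varepsilon v_n)$. The quadratic part at $(s\tilde u_n,\varepsilon s\tilde v_n)$ becomes $\tfrac{s^2}{2}\bigl(\|(\tilde u_n,0)\|_E^2-\varepsilon^2\|(0,\tilde v_n)\|_E^2\bigr)\ge \tfrac{s^2}{2}(1-\varepsilon^2)\|(\tilde u_n,0)\|_E^2>\tfrac{s^2}{4}(1-\varepsilon^2)$, using only $\|(\tilde u_n,0)\|_E^2>\tfrac12$ (which does follow from $I(u_n,v_n)>0$ and $F\ge0$). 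Since the $F$-integral still vanishes by strong convergence to $0$ in the weighted spaces, one gets $\liminf_n I(U_n^\varepsilon)=+\infty$ for each fixed $\varepsilon\in(0,1)$. On the other hand, $|(t_n^\varepsilon u_n,\varepsilon t_n^\varepsilon v_n)|\le|(u_n,v_n)|$ pointwise, so the Jeanjean--Tanaka monotonicity hypothesis on $Q$ still applies and gives $I(U_n^\varepsilon)\le M$ whenever $t_n^\varepsilon\in(0,1)$; the paper then rules out $t_n^\varepsilon=1$ by a continuity-in-$\varepsilon$ argument. The $\varepsilon$-damping of the negative component is exactly the device that circumvents the indefiniteness obstacle you flagged.
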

\begin{proof}
	Let \ $b>0$ \ be \ a \ constant \ such \ that \ $c-b>0$ \ and \ take \ ${(u_n,v_n) \subset I^{-1}([c-b,c+b])}$ \ such that $\big(1+||(u_n,v_n)||_E\big)||I'(u_n,v_n)||_{E'}\to0$, it is necessary to show that $(u_n,v_n)$ is bounded. 
	Seeking a contradiction, suppose \ that \ $||(u_n,v_n)||_E\to +\infty$, \ up \ to \ \ subsequences and define  $(\tilde{u}_n,\tilde{v}_n):=\dfrac{(u_n,v_n)}{||(u_n,v_n)||_E}$, the normalized sequence. Since it is bounded, \ $(\tilde{u}_n,\tilde{v}_n) \rightharpoonup (\tilde{u},\tilde{v})$ in $E$, up to subsequences, and due to the compact embeddings \ $E\hookrightarrow L^s_{h(x)}(\mathbb{R}^N,\mathbb{R}^2)$ for $2\leq s<+\infty$ if $N=2$ and $2\leq s < 2^\#$ if $N>2$, it implies that $(\tilde{u}_n,\tilde{v}_n) \to (\tilde{u},\tilde{v})$ in $L^s_{h(x)}(\mathbb{R}^N,\mathbb{R}^2)$.
	
	\hspace{0.75cm}Suppose \ that \ $(\tilde{u}(x),\tilde{v}(x))=(0,0)$ almost everywhere  in $\mathbb{R}^N$  and  for ${\varepsilon\in(0,1)}$ define
	$U^\varepsilon_n = (t^\varepsilon_nu_n,\varepsilon{t^\varepsilon_n}v_n) \in E$ such that
	\begin{equation}\label{esl5e1}
	I(U^\varepsilon_n) = \max_{t \in [0,1]}I\big(tu_n,\varepsilon{tv_n}\big).
	\end{equation}
	Provided that $(\tilde{u},\tilde{v}) \to 0$ in $L^s_{h(x)}(\mathbb{R}^N,\mathbb{R}^2)$, from $(F'_1)$ for each $s>0$ fixed, there exists $C_s>0$, which does not depend on $\varepsilon$, such that
	\begin{eqnarray}\label{esl5e2}
	&&\int_{\mathbb{R}^N}F\big(x,s\tilde{u}_n(x),\varepsilon{s\tilde{v}_n(x)}\big)\ dx\nonumber\\ &&\leq C_s\left\{||(\tilde{u}_n,\tilde{v}_n)||^2_{L^2_{h(x)}(\mathbb{R}^N,\mathbb{R}^2)} + ||(\tilde{u}_n,\tilde{v}_n)||^p_{L^p_{h(x)}(\mathbb{R}^N,\mathbb{R}^2)}\right\} \to 0,
	\end{eqnarray}
	as $n \to +\infty$. Moreover, since $||(u_n,v_n)||_E\to +\infty$, for $n$ large enough one has
	\begin{eqnarray}\label{esl5e3}
	I(U^\varepsilon_n)&\geq& I\left(\frac{su_n}{||(u_n,v_n)||_E},\frac{\varepsilon sv_n}{||(u_n,v_n)||_E}\right)\nonumber\\
	&=& \dfrac{s^2}{2}\Big(||(\tilde{u}_n,0)||_E^2 - \varepsilon^2||(0,\tilde{v}_n)||_E^2\Big) - \int_{\mathbb{R}^N}F\big(x,s\tilde{u}_n(x),{\varepsilon s\tilde{v}_n(x)}\big)\ dx.
	\end{eqnarray}
	Since $I(u_n,v_n)\geq c-b>0$, it follows that $||(\tilde{u}_n,0)||_E^2>||(0,\tilde{v}_n)||_E^2$. Combining  this inequality with $1 = ||(\tilde{u}_n,0)||_E^2+||(0,\tilde{v}_n)||_E^2$, it implies that $||(\tilde{u}_n,0)||_E^2> \dfrac{1}{2}$. Hence, from (\ref{esl5e2}) and (\ref{esl5e3}) it implies that
	\begin{eqnarray*}
		\liminf_{n \to+\infty}I(U^\varepsilon_n)&\geq& \dfrac{s^2}{2}\liminf_{n \to+\infty}\Big(||(\tilde{u}_n,0)||_E^2 - \varepsilon^2||(\tilde{u}_n,0)||_E^2\Big)\\
		&=& \dfrac{s^2}{2}(1-\varepsilon^2)\liminf_{n \to+\infty}||(\tilde{u}_n,0)||_E^2\\
		&\geq& \dfrac{s^2}{4}(1-\varepsilon^2),
	\end{eqnarray*}
	for all $s>0$ and $\varepsilon \in (0,1)$. Thus,
	\begin{equation}\label{esl5e0}
	\liminf_{n \to+\infty} I(U^\varepsilon_n)=+\infty \quad \text{for \ all} \quad \varepsilon \in (0,1).
	\end{equation}
	\hspace{0.75cm}On the other hand, if there exists a subsequence $(n_k)\subset \mathbb{N}$, such that for some $\varepsilon_{n_k} \in (0,1)$ one has
	$t^{\varepsilon_{n_k}}_{n_k} \in (0,1)$, in view of (\ref{esl5e1}) it implies that $I'(U^{\varepsilon_{n_k}}_{n_k})U^{\varepsilon_{n_k}}_{n_k} = 0$. In addition, since $t^{\varepsilon_{n_k}}_{n_k}\in(0,1)$, in virtue of $(F'_3)$ it follows that
	\begin{eqnarray}\label{esl5e4}
	I(U^{\varepsilon_{n_k}}_{n_k})&=& I(U^{\varepsilon_{n_k}}_{n_k})-\dfrac{1}{2}I'(U^{\varepsilon_{n_k}}_{n_k})U^{\varepsilon_{n_k}}_{n_k}\nonumber\\ &=& \dfrac{1}{2}\int_{\mathbb{R}^N}Q\big(x,t^{\varepsilon_{n_k}}_{n_k}u_{n_k}(x),{\varepsilon_{n_k}} t^{\varepsilon_{n_k}}_{n_k}v_{n_k}(x)\big)\,dx\nonumber\\
	&\leq& \dfrac{D}{2}\int_{\mathbb{R}^N}Q\big(x,u_{n_k}(x),v_{n_k}(x)\big)\,dx\nonumber\\
	&=& \dfrac{D}{2}\Big\{I(u_{n_k},v_{n_k}) - \dfrac{1}{2}I'(u_{n_k},v_{n_k})(u_{n_k},v_{n_k})\Big\}\nonumber\\
	&\leq& M,
	\end{eqnarray}
	for some $M>0$, provided that $(u_{n_k},v_{n_k})$ is also a Cerami sequence. However, from (\ref{esl5e4}) one arrives at
	$$\limsup_{k\to+\infty}I(U^{\varepsilon_{n_k}}_{n_k})\leq M,$$
	contradicting (\ref{esl5e0}). Thus, such a subsequence $(n_k) \subset \mathbb{N}$ cannot exist and hence, there exists $n_0 \in \mathbb{N}$ such that for all $n\geq n_0$ and for all $\varepsilon \in (0,1)$ one has $t^\varepsilon_n=1$. Furthermore, by the continuity of $I$ it yields
	$$I(U^\varepsilon_n) = I(u_n,\varepsilon v_n) \to I(u_n,v_n) \quad \text{as} \quad \varepsilon \to 1,$$
	for each $n \geq n_0$. Then, for each $n \geq n_0$ there exists $\varepsilon_n \in (0,1)$ such that 
	\begin{equation}\label{esl5e01}
	I(U^{\varepsilon}_n) < I(u_n,v_n)+\dfrac{1}{n},
	\end{equation}
	for all $\varepsilon \in [\varepsilon_n,1]$. Hence, if $\tilde{\varepsilon}_n \in [\varepsilon_n,1]$ is such that $I(U^{\tilde{\varepsilon}_n}_n)\geq I(U^{\varepsilon}_n)$ for all $\varepsilon \in [\varepsilon_n,1]$, in view of (\ref{esl5e01}) it follows that
	\begin{equation}\label{esl5e02}
	\limsup_{n\to+\infty}I(U^{\tilde{\varepsilon}_n}_n)\leq\limsup_{n\to+\infty}I(u_n,v_n) = c.
	\end{equation}
	However, from (\ref{esl5e0}) for each $\varepsilon\in[\varepsilon_n,1)$ there exists $n_\varepsilon\in\mathbb{N}$ such that $n\geq n_\varepsilon$ implies $I(U^\varepsilon_n)\geq \dfrac{1}{1-\varepsilon}\geq\dfrac{1}{1-\varepsilon_n}$, then, it yields
	\begin{equation}\label{esl5e03}
	\liminf_{n\to+\infty}I(U^{\tilde{\varepsilon}_n}_n)\geq\liminf_{n\to+\infty}I(U^{\varepsilon}_n) \geq \lim_{n \to +\infty}\dfrac{1}{1-\varepsilon_n} = +\infty,
	\end{equation}
	since $\varepsilon_n \to 1,$ in virtue of (\ref{esl5e01}). Thus, (\ref{esl5e03}) contradicts (\ref{esl5e02}).
	
	\hspace{0.75cm}Therefore, $(\tilde{u},\tilde{v})$ cannot be the null function, namely there exists $\Omega\subset\mathbb{R}^N$ with $|\Omega|>0$, such that $(\tilde{u}(x),\tilde{v}(x))\not= (0,0)$ for all $x\in \Omega$. Furthermore, since
	\begin{eqnarray*}
	\int_{\mathbb{R}^N}\dfrac{F(x,u_n(x),v_n(x))}{||(u_n,v_n)||_E^2}dx |&=& \dfrac{1}{2}\Big(||(\tilde{u}_n,0)||_E^2 - ||(0,\tilde{v}_n)||_E^2\Big)- \dfrac{I(u_n,v_n)}{||(u_n,v_n)||_E^2}\\
	&\leq& \dfrac{1}{2}||(\tilde{u}_n,0)||_E^2\leq \dfrac{1}{2},
\end{eqnarray*}
	one has
	\begin{equation}\label{esl5e5}
	\dfrac{1}{2}\geq \int_{\Omega}\dfrac{F(x,u_n(x),v_n(x))}{|(u_n(x),v_n(x))|^2}|(\tilde{u}_n,\tilde{v}_n)|^2dx.
	\end{equation}
	Nevertheless, in view of $(F'_2)$, for all $x\in\Omega$ it implies that
	$$\dfrac{F(x,u_n(x),v_n(x))}{|(u_n(x),v_n(x))|^2}|(\tilde{u}_n,\tilde{v}_n)|^2 \to +\infty,$$
	as $n \to +\infty$, since $|(u_n(x),v_n(x))|= |(\tilde{u}_n(x),\tilde{v}_n(x))|\ ||(u_n,v_n)||_E \to +\infty$ for all $x \in \Omega$, as $n \to +\infty$. Passing (\ref{esl5e5}) to the limit and applying Fatou's Lemma, it yields
	$$\dfrac{1}{2}\geq \liminf_{n \to+\infty}\int_{\Omega}\dfrac{F(x,u_n(x),v_n(x))}{|(u_n(x),v_n(x))|^2}|(\tilde{u}_n,\tilde{v}_n)|^2dx =+\infty,$$
	which gives a contradiction. Therefore, $(u_n,v_n)$ is bounded, and the result holds.
\end{proof}

\hspace{0.75cm}At long last, the main result is proved.

\begin{proof}[Proof of Theorem \ref{est2}]
	The same proof of Theorem \ref{est1} is applicable.
	\end{proof}

\hspace{0.75cm}It is worth to mention that depending on the nonlinearity, the solutions $(u,v)$ found in Theorem \ref{est1} or in Theorem \ref{est2} could be either semi-trivial  $(u,0)$ or vectorial $(u,v)$, but never semi-trivial $(0,v)$, since such solutions have positive energy $c>0$.

\hspace{0.75cm}We also would like to point out that, instead of asking for hypothesis $(F'_3)$ one, alternatively, could ask for the following assumption so as to get the boundedness of Cerami sequences. \\
	
	\hspace{-0.7cm} \ $(F''_3)$ There \ exist \ constants $c_1>0$ \ and \ $\theta>\min\left\{1,\dfrac{N}{2}(p-2)\right\}$ such that for all ${(x,u,v) \in \mathbb{R}^N\times\mathbb{R}^2}$,
	$$ Q(x,u,v)\geq c_1h(x)|(u,v)|^\theta.$$
	
	\vspace{0.3cm}
	\hspace{0.75cm}Since the functional associated to system (\ref{ES}) is indefinite, hypothesis $(F''_3)$ is relevant to establish the boundedness of Cerami sequences. Indeed, $(F''_3)$ is a consequence of both following conditions $(AR)$ and $(F'_4)$, which also give the boundedness of Cerami sequences. \\
	
	\hspace{-0.4cm}$(AR)$ \ \ There exists $\mu>2$ such that for all $(x,u,v) \in \mathbb{R}^N\times\mathbb{R}^2$,
	
	$$0\leq \mu F(x,u,v)\leq F_u(x,u,v)u + F_v(x,u,v)v;$$
	
	\hspace{-0.4cm}$(F'_4)$ \ \ There \ exist \ constants \ $c_1>0$ and $\theta>\min\left\{1,\dfrac{N}{2}(p-2)\right\}$ such that for all ${(x,u,v) \in \mathbb{R}^N\times\mathbb{R}^2}$,
	$$F(x,u,v)\geq c_1h(x)|u,v|^\theta.$$\\
	Moreover, only hypothesis $(AR)$ is not sufficient for the boundedness, provided that standard arguments involving this assumption only bounds the difference
	$$||(u_n,0)||_E^2-||(0,v_n)||_E^2,$$
	which appears in the functional. 
	Furthermore, considering $\theta > \dfrac{N}{2}\dfrac{\alpha}{\alpha -1}(p-2)$ instead of $\theta>\dfrac{N}{2}(p-2)$ and exploiting the hypotheses assumed on $V_i$, it is possible to weak hypothesis $(F''_3)$ only assuming
	$$ F_u(x,u,v)u + F_v(x,u,v)v - 2F(x,u,v)\geq c_1|(u,v)|^\theta.$$
\hspace{0.75cm}In this way, it is possible to prove the following lemma.
\begin{lemma}	\label{esl42}
	Under the hypotheses $(V_1)_i-(V_3)_i$, $(V)$, $(F'_1)-(F'_2)$ and $(F''_3)$, $I$ satisfies $(I_4)$.	
\end{lemma}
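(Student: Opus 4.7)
The plan is to adapt the contradiction strategy of Lemma~\ref{esl5} to the new hypothesis $(F''_3)$, splitting into two cases according to the weak limit of the normalized sequence. Only the case where this weak limit is zero needs a new argument: in place of the Jeanjean--Tanaka trick via $U^\varepsilon_n$, I would use a Hölder interpolation in weighted Lebesgue spaces that exploits the polynomial lower bound provided by $(F''_3)$.

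I would begin by assuming, for contradiction, that $\|(u_n,v_n)\|_E \to +\infty$ for a Cerami sequence with $I(u_n,v_n)\in [c-b,c+b]$, $c-b>0$, and setting $(\tilde u_n,\tilde v_n):=(u_n,v_n)/\|(u_n,v_n)\|_E$, which converges weakly in $E$ to some $(\tilde u,\tilde v)$ and strongly in $L^s_{h(x)}(\mathbb{R}^N,\mathbb{R}^2)$ for $2\leq s<2^\#$. The key preliminary fact is that the identity $\int Q(x,u_n,v_n)\,dx = 2I(u_n,v_n) - I'(u_n,v_n)(u_n,v_n)$, together with the Cerami condition, shows that $\int Q\,dx$ is bounded, so $(F''_3)$ yields $\int h(x)|(u_n,v_n)|^\theta\,dx \leq C$. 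The case $(\tilde u,\tilde v)\not\equiv 0$ is then handled exactly as in Lemma~\ref{esl5}: the bound $I\geq c-b>0$ together with $F\geq 0$ forces $\|(\tilde u_n,0)\|_E^2>1/2$, and then $(F'_2)$ and Fatou's lemma on $\{(\tilde u,\tilde v)\neq 0\}$ produce the contradiction.

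The novel case is $(\tilde u,\tilde v)\equiv 0$ a.e. Testing $I'(u_n,v_n)$ against $(\tilde u_n,0)$ and $(0,\tilde v_n)$, dividing by $\|(u_n,v_n)\|_E$ and summing, the orthogonality $E=E_1\oplus E_2$ gives
\[
1 \;=\; \int_{\mathbb{R}^N}\frac{F_u(x,u_n,v_n)u_n - F_v(x,u_n,v_n)v_n}{\|(u_n,v_n)\|_E^2}\,dx + o(1),
\]
and it suffices to show this integral tends to zero. The $(F'_1)$ growth bound $|F_u u_n - F_v v_n|\leq Ch(x)(|(u_n,v_n)|^2+|(u_n,v_n)|^p)$ splits the integrand: the quadratic part gives $\int h|(\tilde u_n,\tilde v_n)|^2\,dx$, which vanishes by the compact embedding $E\hookrightarrow L^2_{h(x)}$ and $(\tilde u,\tilde v)\equiv 0$. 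For the $p$-term, for any $s\in(p,2^\#)$ I would interpolate via Hölder in weighted $L^r_{h(x)}$ spaces to obtain
\[
\int h|(u_n,v_n)|^p\,dx \;\leq\; \Big(\int h|(u_n,v_n)|^\theta dx\Big)^{1-\lambda}\Big(\int h|(u_n,v_n)|^s dx\Big)^\lambda \;\leq\; C\|(u_n,v_n)\|_E^{\lambda s},
\]
with $\lambda=(p-\theta)/(s-\theta)$, using the continuous embedding $E\hookrightarrow L^s_{h(x)}$. Dividing by $\|(u_n,v_n)\|_E^2$ produces a factor $\|(u_n,v_n)\|_E^{\lambda s-2}$, which vanishes as soon as $\lambda s<2$.

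The main obstacle is showing that the lower bound on $\theta$ in $(F''_3)$ does allow a choice of $s<2^\#$ making $\lambda s<2$, i.e.\ $\theta>s(p-2)/(s-2)$. Since this function is decreasing in $s$ and $2^\#=2^*-4/(\alpha(N-2))$ depends on $\alpha>1$ from $(V)$, the verification reduces to a careful algebraic check balancing $N$, $p$, $\alpha$ and $\theta$; this is precisely where, as indicated in the remark preceding the lemma, the condition may need to be strengthened to $\theta>N\alpha(p-2)/(2(\alpha-1))$ in order to be able to pick $s$ arbitrarily close to $2^\#$. Once this is in place, the displayed integral is $o(1)$, yielding the contradiction $1=o(1)$ and thus the desired boundedness of $(u_n,v_n)$.
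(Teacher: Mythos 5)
Your overall strategy---first extracting the bound $\int_{\mathbb{R}^N}h(x)|(u_n,v_n)|^\theta\,dx\le C$ from $(F''_3)$ together with the boundedness of $\int Q\,dx=2I(u_n,v_n)-I'(u_n,v_n)(u_n,v_n)$ along a Cerami sequence, and then controlling the nonlinear term by interpolation against an embedding $E\hookrightarrow L^s_{h(x)}$---is exactly the route the paper indicates (the paper gives no details, deferring to \cite{So} and \cite{Ra}, but states precisely this plan: bound first in $L^\theta_{h(x)}(\mathbb{R}^N,\mathbb{R}^2)$ and then in $E$). Your contradiction/normalization wrapper and the dichotomy on the weak limit $(\tilde u,\tilde v)$ are superfluous: once the interpolation inequality is available, the direct estimate $\|(u_n,v_n)\|_E^2\le o(1)+\int\big(|F_uu_n|+|F_vv_n|\big)dx\le o(1)+\varepsilon C\|(u_n,v_n)\|_E^2+C_\varepsilon\|(u_n,v_n)\|_E^{\lambda s}$ with $\lambda s<2$ bounds the sequence outright, with no case analysis and no appeal to $(F'_2)$ or Fatou.

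The genuine gap is the one you flag yourself, and it is not a routine algebraic check that happens to need tightening---as set up, your argument cannot reach the stated hypothesis. Your interpolation lives entirely in the weighted scale $L^r_{h(x)}$, where the embedding from $E$ stops at $2^\#=2^*-\tfrac{4}{\alpha(N-2)}<2^*$ for $N>2$; since $s\mapsto \tfrac{s(p-2)}{s-2}$ is decreasing, the requirement $\lambda s<2$ for some admissible $s<2^\#$ amounts to $\theta>\tfrac{2^\#}{2^\#-2}(p-2)=\tfrac{N\alpha-2}{2(\alpha-1)}(p-2)$, which is strictly larger than $\tfrac{N}{2}(p-2)$ for every $N>2$ and $\alpha>1$, hence not implied by $(F''_3)$ (a fortiori not by the $\min$ there, which is at most $1$). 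Concretely, for $N=3$, $p=3$, $\alpha=2$ one has $2^\#=4$, your argument needs $\theta>2$, while $(F''_3)$ asks only $\theta>1$. The fix used in \cite{Ra,So} is to leave the weighted scale: split $h\le c_0\big(1+(\max_iV_i)^{1/\alpha}\big)$ via $(V)$, use $h\ge a_0>0$ to turn the $L^\theta_{h}$ bound into an unweighted $L^\theta$ bound, interpolate the unweighted piece against the full Sobolev embedding $H^1\hookrightarrow L^{2^*}$ (endpoint $2^*$, which produces the factor $\tfrac{N}{2}$), and absorb the $V_i^{1/\alpha}$ piece by H\"older against $\int V_iu_i^2\,dx\le\|(u_n,v_n)\|_E^2$. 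Without this extra use of $(V)$, your proof establishes the lemma only under the stronger assumption $\theta>\tfrac{N\alpha-2}{2(\alpha-1)}(p-2)$ (your proposed $\theta>\tfrac{N\alpha}{2(\alpha-1)}(p-2)$ is the exponent the paper reserves for the \emph{unweighted} version of the lower bound, a different statement). A minor additional point: the interpolation as written presumes $\theta<p$; the case $\theta\ge p$ should be recorded separately, interpolating between $L^2_{h(x)}$ and $L^\theta_{h(x)}$, where it is immediate.
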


This result is obtained by bounding the Cerami sequences first in $L^\theta_{h(x)}(\mathbb{R}^N,\mathbb{R}^2)$ and after in $E$, making use of the growth of $F$. For the details on this argument, see \cite{So} and \cite{Ra}.

\bigskip

\end{document}